\newcommand{\bi}{\begin{itemize}}
\newcommand{\ei}{\end{itemize}}
\newcommand{\be}{\begin{enumerate}}
\newcommand{\ee}{\end{enumerate}}
\newcommand{\bc}{\begin{center}}
\newcommand{\ec}{\end{center}}
\newcommand{\bt}{\begin{tabular}}
\newcommand{\et}{\end{tabular}}
\newcommand{\ba}{\begin{array}}
\newcommand{\ea}{\end{array}}
\newcommand{\phiGS}{\phi_{G,S}}
\newcommand{\cA}{c}
\newcommand{\cB}{d}
\newcommand{\cC}{\varsigma}
\newcommand{\nD}{n_0}
\newcommand{\preceqD}{\preceq_2}
\newcommand{\approxD}{\approx_2}
\newcommand{\preceqF}{\preceq_1}
\newcommand{\npreceqF}{\npreceq_1}
\newcommand{\approxF}{\approx_1}
\newcommand\BaumGroup{{\mathfrak G}_{G,H,S}}
\newcommand\Olshan{Ol'shanskii}
\newenvironment{proof*}[1]
  {
   \begin{proof}}
  {\end{proof}}
\newtheorem{theorem}{Theorem}[section]
\newtheorem{conjecturex}{\bfseries Conjecture}
\newtheorem{lemma}[theorem]{Lemma}
\newtheorem{proposition}[theorem]{Proposition}
\theoremstyle{definition}
\newtheorem{remark}[theorem]{Remark}
\newtheorem{definition}[theorem]{Definition}
\newtheorem{example}[theorem]{Example}
\newcommand\N{\mathbb N}
\newcommand\R{\mathbb R}
\newcommand\Rplus{\mathbb R^+}
\newcommand\dom{\mathrm{dom}}
\newcommand\distfun{Cayley distance function}
\newcommand\F{\mathcal F}
\newcommand{\ii}{\mathfrak{i}}
\renewcommand{\geq}{\geqslant} \renewcommand{\leq}{\leqslant}  \renewcommand{\le}{\leqslant}
\title{ On the geometry of Cayley automatic groups}
\author[D. Berdinsky]{Dmitry Berdinsky\textsuperscript{a,b}}
\address{\textsuperscript{a}Department of Mathematics,
		Faculty of Science,
		Mahidol
		University, Bangkok, 10400, Thailand
		\textsuperscript{b}Centre of Excellence in Mathematics,
		Commission on Higher Education, Bangkok, 10400, Thailand}
\email{berdinsky@gmail.com}
\author[M. Elder]{Murray Elder}\thanks{The second author  acknowledges support from  Australian Research Council grant DP160100486}
\address{School of Mathematical and Physical Sciences, University of Technology Sydney, Ultimo, NSW 2007, Australia}
\email{murray.elder@uts.edu.au}
\author[J. Taback]{Jennifer Taback}\thanks{The third author acknowledges support from Simons Foundation grant 31736 to Bowdoin College.}
\address{Department of Mathematics,
Bowdoin College, 8600 College Station, Brunswick, ME 04011, USA} \email{jtaback@bowdoin.edu}
\date{\today}
\subjclass[2020]{20E22, 20F10, 20F65, 68Q45}
\keywords{Automatic group; Cayley automatic group; Cayley distance function; Dehn function; wreath product}
\begin{document}

\begin{abstract}
In contrast to being automatic, being Cayley automatic {\em a priori} has no geometric consequences. 
Specifically, Cayley graphs of automatic groups enjoy a fellow traveler property. 
Here we study a 
distance function introduced by the first author and  Trakuldit which aims to 
measure how far a  Cayley automatic group is from being automatic, in terms of how badly the Cayley graph  fails the fellow traveler property.
The first author and  Trakuldit showed that if it fails by at most a constant amount, then the group is in fact automatic. 
In this article we show that for a large class of non-automatic Cayley automatic groups this function is 
bounded below by a linear function in a precise sense defined herein. In fact,  for all  Cayley automatic groups which have super-quadratic Dehn function, or which are not finitely presented, we can construct a non-decreasing function which (1) depends only on the group and (2)  bounds from below the distance function for any Cayley automatic structure on the group.
\end{abstract}

\maketitle

\section{Introduction}
\label{sec:intro}

Cayley automatic groups generalize the class of automatic groups while retaining their key {\em algorithmic} 
properties.  
Namely, the
word problem in a Cayley automatic group is decidable in quadratic time,
regular normal forms for group elements can be computed in quadratic time, and
the first order theory for a (directed, labeled) Cayley graph of a Cayley automatic group is decidable. 
Their history traces back to   S{\'e}nizergues who observed that the (standard) Cayley graph for the integral Heisenberg group is FA-presentable,  with 
a proof
first appearing in 
\cite{BlumensathG}
in 2004
 (see in particular page 651), 
and the concept was brought to the attention of combinatorial/geometric group theorists by  Kharlampovich, Khoussainov
and Miasnikov in \cite{KKMjournal}.

The family of Cayley automatic groups is much broader than that of automatic groups, as it  includes, for example, all finitely generated nilpotent
groups of nilpotency class
two \cite{KKMjournal}, the Baumslag-Solitar groups \cite{BerK-BS,KKMjournal},
higher rank lamplighter groups \cite{Taback18}, and restricted wreath products of the form $G\wr H$ where $G$ is Cayley automatic and $H$ is (virtually) infinite cyclic \cite{BETwreath,berdkhouss15}.

The existence of a Cayley automatic structure for a group $G$ appears to impose no restrictions on its geometry.  This differs from the existence of an automatic structure; if a group $G$ admits an automatic structure then the Cayley graph with respect to any finite generating set $S$ enjoys the so-called fellow traveler property.
This geometric condition requires that the normal form representatives for a pair of group elements at distance 1 in the Cayley graph $\Gamma(G,S)$ remain a uniformly bounded distance apart in this graph.

The goal of this paper is to explore the geometry of the Cayley graph of a Cayley automatic group, and in particular, to understand an analogue of the fellow traveler property for these groups.
A Cayley automatic group differs from an automatic group in that the normal form for group elements is defined over a finite symbol alphabet rather than a set $S$ of generators.  However, without loss of generality we can take these symbols to be additional generators, renaming the larger generating set $S$, and for $g \in G$ obtain a normal form which describes a path in the Cayley graph $\Gamma(G,S)$ but (most likely) not a path to the vertex labeled  $g$.  
In lieu of a fellow traveler property, we investigate the distance in $\Gamma(G,S)$ between the vertex labeled $g$ and the endpoint of this path.

To quantify how far a Cayley automatic structure is from being automatic, we follow the first author and Trakuldit in  \cite{measuringcloseness} and define the Cayley distance function $f_{\psi}$ for a given Cayley automatic structure $\psi$, where $f_{\psi}(n)$ is the maximum distance between a normal form word representing $g \in G$ and the vertex labeled $g$, over all 
normal forms of word length at most $n$.
In \cite{measuringcloseness} it is shown that $G$ is automatic if and only if this function is equivalent to a constant function, in a notion of equivalence defined below.
Thus for Cayley automatic groups which are not automatic this function is always unbounded and non-decreasing.
This motivates our investigation of when the \distfun\ might be bounded below by a non-constant function, quantitatively separating $G$ from the class of automatic groups.
However, the possibility exists that a group may admit a sequence of Cayley automatic structures for which the corresponding sequence of \distfun s limit to a constant function, but never contains the  constant function.

In this paper we prove that such limiting behaviour is not possible in any group which is not finitely presented, or in any finitely presented group that has a  super-quadratic Dehn function, as given in Definition~\ref{defn:strong-super}.  In each case, we  construct a concrete unbounded function depending only on the group, so that the \distfun\ for any Cayley automatic structure on the group is bounded below by this function, up to equivalence.
We say that a Cayley automatic
group $G$ is {\em $f$-separated} if the \distfun\ with respect to any Cayley automatic structure on $G$ is bounded below by a function in the equivalence class of $f$ (Definition~\ref{def:f-separated}).

Let $\ii$ denote the function $\ii(n)=n$ on some domain $[N,\infty)$.
Super-quadratic and strongly-super-polynomial functions, referred to in Theorem~\ref{thmA:fp} below, are introduced in Definition~\ref{defn:strong-super}.
We prove the following.
\begin{restatable}[Finitely presented groups]{theoremx}{ThmA}
\label{thmA:fp}
If $G$ is a finitely presented Cayley automatic group with super-quadratic Dehn function,
 then there exists an unbounded function $\phi$ depending only on $G$ so that $G$ is $\phi$-separated.
 Furthermore, if $G$ has strongly-super-polynomial Dehn function, then  $G$ is $\ii$-separated.
\end{restatable}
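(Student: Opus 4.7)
The plan is to reduce the theorem to an isoperimetric inequality relating the Dehn function $\delta_G$ to the Cayley distance function $f_\psi$ of an arbitrary Cayley automatic structure $\psi$. Specifically, I would prove that there are constants $C_1, C_2$ depending only on $G$ and a chosen finite generating set $S$ such that
\[
\delta_G(n) \;\leq\; C_1\, n^2 \, f_\psi(C_2 n) \;+\; C_1\, n^2
\]
holds for every Cayley automatic structure $\psi$ on $G$.

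To establish this inequality I fix $\psi: L \to G$ and enlarge the generating set so that the symbol alphabet of $L$ is contained in $S$. A null-homotopic word $w = s_1 \cdots s_n$ traces vertices $e = g_0,\ldots,g_n = e$ in $\Gamma(G,S)$. For each $g_i$ let $u_i := \psi^{-1}(g_i)$; this normal form has length at most linear in $d(e, g_i) \leq n/2$, and its endpoint $\bar u_i$ (reading $u_i$ as a path in $\Gamma(G,S)$ from $e$) lies within $f_\psi(Cn)$ of $g_i$. I would then build a van Kampen diagram for $w$ as a stack of $n$ strips, one per edge $(g_i, g_{i+1})$: the $i$-th strip has boundary consisting of $u_i$, a connector of length at most $f_\psi(Cn)$ from $\bar u_i$ to $g_i$, the generator $s_{i+1}$, a connector from $g_{i+1}$ to $\bar u_{i+1}$, and $u_{i+1}^{-1}$. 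To fill the strip I invoke the synchronous multiplier automaton for $s_{i+1}$, whose finite-state behaviour subdivides the strip into cells, with rungs routed through the connector arcs; together with the boundedness of the multiplier's state set this yields a strip of area $O(n\, f_\psi(Cn))$. Summing over the $n$ strips gives the displayed bound.

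Both parts of the theorem then follow by rearranging the inequality. If $\delta_G$ is super-quadratic then $\delta_G(n)/n^2 \to \infty$, so $f_\psi(C_2 n) \succeq \delta_G(n)/n^2 - 1$, and the function $\phi(n) := \delta_G(n/C_2)/(n/C_2)^2 - 1$ is an unbounded non-decreasing function depending only on $G$ with $f_\psi \succeq \phi$, exhibiting $G$ as $\phi$-separated. If in addition $\delta_G$ is strongly super-polynomial, then $\delta_G(n)/n^2$ exceeds every polynomial in $n$ and in particular exceeds $n$, giving $f_\psi \succeq \ii$, so $G$ is $\ii$-separated.

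The principal technical obstacle is the ladder-filling step. Unlike the automatic case, where the synchronous fellow-traveler property provides constant-length rungs, a Cayley automatic multiplier provides only abstract synchronization: endpoints of corresponding prefixes of $u_i$ and $u_{i+1}$ may be arbitrarily far apart in $\Gamma(G,S)$, and the naive bound on rung length is $O(n)$, which would only yield the useless estimate $\delta_G(n) \le Cn^3$. Improving this to $O(f_\psi(Cn))$ requires routing rungs through the short connector segments at the ends of each $u_i$ and exploiting the finiteness of the multiplier's state set to amortize the rung lengths along the strip; this is where the Cayley distance function enters the argument in an essential way, and where the main work of the proof lies.
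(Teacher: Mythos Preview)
Your core inequality is wrong, and this undermines both conclusions. The ladder-filling argument does not give $\delta_G(n)\le C_1 n^2 f_\psi(C_2 n)+C_1 n^2$; what it gives is
\[
\delta_G(n)\;\le\; D\,n^2\,\delta_G\bigl(f(n)\bigr),\qquad f\approxF f_\psi,
\]
with the Dehn function appearing on \emph{both} sides. The reason is exactly the ``technical obstacle'' you flag but then mis-resolve: routing the rungs through the short connectors and using finiteness of the multiplier's state set gives rungs of length $O(f_\psi(Cn))$ (this part of your sketch is right, and is what the paper does in Proposition~\ref{prop:dehnbound}), so each ladder cell has \emph{perimeter} $O(f_\psi(Cn))$. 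But those cells are arbitrary loops in $\Gamma(G,S)$, not relators of the fixed presentation; filling each one costs $\delta_G(O(f_\psi(Cn)))$ relators, not $O(f_\psi(Cn))$. There is no amortization that collapses this to your linear-in-$f_\psi$ bound.

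Because of this, your downstream derivations fail. For the super-quadratic case you cannot simply set $\phi(n)\sim\delta_G(n)/n^2$; with the correct recursive inequality one instead combines $n^2 t(n)\le K\delta_G(Mn)$ (from $n^2\ll\delta_G$) with $\delta_G(n)\le Dn^2\delta_G(f(n))$ to get $t(n)/(KD)\le\delta_G(f(Mn))$, and then defines $\phi(n)=\min\{m:\,t(n)/(KD)\le\delta_G(m)\}$. For the second part, your argument would yield $\ii$-separation from any Dehn function with $\delta_G(n)/n^2\succeq n$, i.e.\ already for cubic Dehn function --- a far stronger statement than the theorem claims and one your filling does not support. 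The actual argument genuinely needs the strongly-super-polynomial hypothesis $n^2\delta_G\ll\delta_G$: combined with the recursive bound it yields $2\delta_G(n)\le\delta_G(f(Mn))$ for large $n$, which forces $n\le f(Mn)$ by monotonicity of $\delta_G$.
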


 The analogous theorem for non-finitely presented groups is as follows.
A non-finitely presented group is {\em dense} if its irreducible relators have lengths which are ``dense" in the natural numbers; see \S\!~\ref{sec:dense} for a  precise definition.  Wreath products are the prototypical examples of dense groups.

\begin{restatable}[Non-finitely presented groups]{theoremx}{ThmB}
\label{thmB:nfp}
If $G$ is a Cayley automatic group which is not finitely presented, then there is a non-decreasing step function $\phi$ depending only on $G$
that is  linear for infinitely many values,
so that $G$ is $\phi$-separated.
Furthermore, if $G$ is dense then $G$ is $\ii$-separated.
\end{restatable}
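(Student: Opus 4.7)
The overall strategy is to extract from any Cayley automatic structure $\psi$ on $G$ an ``approximate finite presentation'' whose defect is controlled by the Cayley distance function $f_\psi$, and then to exploit the failure of $G$ to be finitely presented to force $f_\psi$ to grow. I fix a Cayley automatic structure $\psi \colon L \to G$ with $L \subseteq \Sigma^*$ regular, and view $\Sigma$ as part of a finite generating set $S$ of $G$. For each $s \in S$, the synchronous regularity of the multiplier relation $R_s = \{(u,v) \in L^2 : \psi(u) s = \psi(v)\}$ provides a two-tape finite automaton whose pumping constant will supply a finite library of bounded-length ``transition'' relators.

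The technical heart of the argument is the following key lemma that I plan to prove: there exist a finite set $R_0 = R_0(\psi) \subset F(S)$ and constants $C, K > 0$ such that every relator $w$ over $S$ of length $n$ is a product of conjugates of elements of $R_0$ together with relators of length at most $C f_\psi(n) + K$. I would establish this by constructing a van Kampen--style diagram for $w$: let $u_j \in L$ be the normal form with $\psi(u_j) = w_{\le j}$ for each prefix, so that $(u_j, u_{j+1}) \in R_{w_{j+1}}$. The pumping structure of the two-tape multiplier automaton then decomposes the ``strip'' between the paths traced by $u_j$ and $u_{j+1}$ into bounded-size cells (the elements of $R_0$) together with two ``correction cells'' at the ends, each of which closes up the gap between the literal endpoint $\pi(u_j)$ of the path $u_j$ in the Cayley graph and the intended element $\psi(u_j) = w_{\le j}$, and hence has boundary length at most $f_\psi(|u_j|) + f_\psi(|u_{j+1}|) + O(1)$. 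Assembling all the strips produces the required expression for $w$.

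Given this lemma, define the group-theoretic function $\phi(n) = \min\{N : \text{every relator over } S \text{ of length } \le n \text{ lies in } \langle\langle R_{\le N}\rangle\rangle\}$, where $R_{\le N}$ denotes the set of relators of length at most $N$. This $\phi$ is a non-decreasing step function depending only on $(G, S)$, is bounded above by the identity, takes value $n$ precisely when an irreducible relator of length exactly $n$ exists, and tends to infinity because $G$ is not finitely presented. The key lemma then yields $\phi(n) \le \max(L_0, C f_\psi(n) + K)$, where $L_0 = \max\{|r| : r \in R_0\}$, so $f_\psi(n) \ge (\phi(n) - K)/C$ for all sufficiently large $n$; this is exactly $f_\psi \succeqF \phi$, establishing $\phi$-separation. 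When $G$ is dense in the sense of \S\ref{sec:dense}, irreducible relators occur at lengths forming a positive-density subset of $\N$, so $\phi(n) \ge cn$ for all large $n$, giving the stronger conclusion $f_\psi \succeqF \ii$.

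The main obstacle will be the tiling argument for the key lemma: the correction cells contribute $f_\psi(|u_j|)$ rather than $f_\psi(n)$, so I need a controlled bound on the normal-form length $|u_j|$ in terms of $n = |w|$. I would establish this by leveraging the bounded difference $||u_j|-|u_{j+1}|| \le c$ coming from synchronous regularity of each $R_s$ (so that $|u_j| = O(n)$ along the prefix chain), together with careful bookkeeping of the pumping of the multiplier automata to extract $R_0$ uniformly; any residual polynomial-in-$n$ overhead is then absorbable by the equivalence $\approxF$ underlying $\phi$-separation.
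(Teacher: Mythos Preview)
Your overall architecture matches the paper's exactly: decompose an arbitrary relator into short relators whose length is controlled by the Cayley distance function (this is the paper's Proposition~\ref{prop:dehnbound}(1)), define $\phi$ via lengths of irreducible relators (the paper's $\phiGS$ is the same step function), and deduce $\phi\preceqF h_{S,\psi}$. Your handling of the dense case and of the $|u_j|=O(n)$ issue via the bounded-increment argument is also correct and in line with the paper.

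The gap is in your proof sketch for the key lemma. You assert that the strip between the Cayley-graph paths $u_j$ and $u_{j+1}$ decomposes into \emph{bounded-size} cells coming from pumping the multiplier automaton, with $f_\psi$ entering only through two correction cells at the top. This is precisely the fellow-traveler picture for \emph{automatic} groups, and it fails here. Synchronous regularity of $R_s$ tells you that at each height $t$ you can extend the prefixes $u_j[1..t],\,u_{j+1}[1..t]$ by short words $x_j,x_{j+1}$ so that $\psi(u_j[1..t]x_j)$ and $\psi(u_{j+1}[1..t]x_{j+1})$ are adjacent; it tells you nothing about the distance between $\pi(u_j[1..t])$ and $\pi(u_{j+1}[1..t])$. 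To build a rung at height $t$ you must pass from $\pi$ to $\psi$ and back, incurring a cost of $2h_{S,\psi}(t+O(1))$ at \emph{every} level, not just at the top. Consequently there is no finite $R_0$ of bounded interior cells; every cell in the corridor has perimeter $O\!\bigl(h_{S,\psi}(cn+d)\bigr)$. Once you make this correction your key lemma becomes exactly Proposition~\ref{prop:dehnbound}(1) (with $R_0=\emptyset$), and the remainder of your argument---the definition of $\phi$, the inequality $\phi(n)\leq Ch_{S,\psi}(cn+d)+K$, and the dense case---goes through verbatim.
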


We conjecture that  for every Cayley automatic group that is not automatic, the  distance function with respect to every Cayley automatic structure on the group
 is bounded below by a linear function, which is equivalent to being $\ii$-separated.
\begin{conjecturex}\label{conj1}
Let $G$ be a  Cayley automatic group. Then $G$ is either automatic or
 $\ii$-separated.
\end{conjecturex}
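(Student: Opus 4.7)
The plan is to extend Theorems~\ref{thmA:fp} and \ref{thmB:nfp} so that every Cayley automatic group that is not automatic receives linear separation. For finitely presented $G$, Theorem~\ref{thmA:fp} leaves open the case of groups whose Dehn function is at most quadratic but which are not automatic; for non-finitely presented $G$, Theorem~\ref{thmB:nfp} leaves open the non-dense case. We would attack these two gaps by different methods.

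For the finitely presented case, the approach is to establish a converse to the heuristic behind Theorem~\ref{thmA:fp}: namely, that any upper bound on $f_\psi$ yields an upper bound on the Dehn function, quantitatively of the form $\delta_G(n) \le C\cdot n\cdot f_\psi(Cn)$ for some constant $C$. Granted such an inequality, if $G$ admits a Cayley automatic structure $\psi$ with $f_\psi$ not equivalent to a linear function from below, then $f_\psi(n)=o(n)$ along a subsequence and hence $\delta_G(n)=o(n^2)$; the Dehn function gap theorem of Bowditch, \Olshan\ and Papasoglu then forces $\delta_G$ to be linear, making $G$ hyperbolic and therefore automatic. To prove the area inequality, one would take a null-homotopic word $w=s_1\cdots s_n$ and build a van Kampen diagram using normal forms $\psi(p_i)$ of the prefixes $p_i=s_1\cdots s_i$ as ``approximate geodesics''. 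Consecutive normal forms $\psi(p_{i-1}), \psi(p_i)$ are related by the Cayley automatic comparator, while their endpoints in the Cayley graph lie within $f_\psi$ of $p_{i-1}$ and $p_i$; stitching together the resulting strips and closing them off by $f_\psi$-length arcs should yield a filling whose area is controlled by $n\cdot f_\psi(Cn)$.

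For the non-finitely presented case, we would strengthen the step function supplied by Theorem~\ref{thmB:nfp} to be $\ii$-equivalent to a linear function in every Cayley automatic structure, not merely on the values witnessed by density. One approach is to show that a regular language of normal forms cannot ``detect'' relators only on a sparse set of lengths without the $f_\psi$-discrepancy accumulating linearly between witnessed lengths: given any infinite set of relator lengths in a non-finitely presented group, a pumping-style argument on the normal form language should produce a sequence of words whose images in $G$ differ by a short element, yet whose normal forms have endpoints separated by at least a linear function of their length.

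The main obstacle is the proposed area inequality $\delta_G(n)\le C\cdot n\cdot f_\psi(Cn)$. Cayley automatic comparators synchronize pairs of normal forms only in the abstract finite-state sense, not geometrically, so it is not immediate that the strip between $\psi(p_{i-1})$ and $\psi(p_i)$ admits a bounded-area subdivision purely in terms of $f_\psi$. Overcoming this — perhaps by augmenting the generating set using comparator states, or by an indirect argument exploiting quasi-geodesicity of Cayley automatic normal forms — is where the bulk of the work lies. If this inequality can be secured in the stated strength, Conjecture~\ref{conj1} follows by combining the gap theorem, the fact that hyperbolic groups are automatic, and the strengthened form of Theorem~\ref{thmB:nfp}.
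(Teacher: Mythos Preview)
The statement you are attempting to prove is Conjecture~\ref{conj1}, which the paper explicitly leaves \emph{open}; there is no proof in the paper to compare against. The paper's Conclusion identifies precisely the gaps you describe (quadratic and almost-quadratic Dehn function in the finitely presented case, non-dense groups in the non-finitely presented case) and states that the authors do not know how to close them.

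Your proposed route for the finitely presented case has a concrete and fatal obstruction: the area inequality $\delta_G(n)\le C\,n\,f_\psi(Cn)$ is false in general. Take $G=\Z^2$ with its standard automatic structure; then $f_\psi$ is bounded (indeed identically zero in the sense of $\approxF$), while $\delta_G(n)\approxD n^2$. Your inequality would force $\delta_G(n)\le C'n$, contradicting the fact that $\Z^2$ is not hyperbolic. More generally, any non-hyperbolic automatic group (CAT(0) groups, braid groups, mapping class groups) is a counterexample. The quadratic term in Proposition~\ref{prop:dehnbound}, namely $\delta(n)\le Dn^2\delta(f(n))$, is not slack to be removed: the factor of $n^2$ is already tight for automatic groups, where it reproduces the classical quadratic isoperimetric inequality. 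So the heuristic ``converse'' you propose cannot hold in the form stated, and the deduction via the subquadratic gap theorem collapses. This is exactly why the paper singles out the higher Heisenberg groups $H_{2k+1}$ and the Diestel--Leader groups as test cases: they are non-automatic Cayley automatic groups with \emph{quadratic} Dehn function, so no argument that factors through a Dehn-function bound of the shape you propose can separate them from automatic groups.

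For the non-finitely presented, non-dense case, your sketch (a pumping argument forcing linear discrepancy between witnessed relator lengths) is too vague to evaluate, but note that the regularity of $L$ constrains only the \emph{lengths} of normal forms, not their geometric positions in $\Gamma(G,S)$; the paper's Proposition~\ref{prop:nonFP} already extracts what pumping gives, and the resulting step function $\phiGS$ need not be $\approxF\ii$ precisely because the irreducible relator lengths can be sparse.
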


Our results provide  support for this conjecture by exhibiting lower bounds for \distfun s  for all non-automatic Cayley automatic groups whose Dehn function is super-quadratic or are not finitely presented.  However, these bounds are not always equivalent to $\ii$.

While we believe the conjecture to be true, two groups for which this linear lower bound is not obvious to us are the following.
\bi[itemsep=5pt]
\item The higher Heisenberg groups  $H_{2k+1}$ for $k\geq 2$. 
 These are nilpotent of step 2  so they are Cayley automatic by  \cite[Theorem 12.4]{KKMjournal}. Since the only nilpotent automatic groups are virtually abelian \cite{Epsteinbook}, they are not automatic.  It is proved in \cite{MR1616147,MR1253544,MR1698761} that their Dehn function is quadratic.

\item The higher rank lamplighter groups, or {\em Diestel-Leader groups}, proven to be Cayley automatic by B\'{e}rub\'{e}, Palnitkar and   the third author in \cite{Taback18}. One can show that the Cayley automatic structure constructed in  \cite{Taback18} has Cayley distance function equivalent to the identity function. These groups are not of type FP$_\infty$ \cite{BartholdiNW}, hence not automatic.
See \cite{Taback18} for a discussion explaining why their Dehn functions are quadratic.

\ei

The paper is organised as follows. In Section~\ref{sec:Auto-CGA} we review automatic and Cayley automatic groups,  define the \distfun\ for a Cayley automatic structure, and finish with a short discussion of Dehn functions. In Section~\ref{sec:FP-dehn} we prove Proposition~\ref{prop:dehnbound}, which relates the \distfun\ to the Dehn function for a finitely presented Cayley automatic group. In  Section~\ref{sec:FP} we define super-quadratic, super-polynomial and strongly-super-polynomial functions and prove Theorem~\ref{thmA:fp}. 
We then turn to non-finitely presented Cayley automatic groups. We introduce the notion of a dense group in  Section~\ref{sec:dense}, then prove Theorem~\ref{thmB:nfp} in Section~\ref{sec:thmB}. We include additional information about strongly-superpolynomial functions in Appendix~\ref{appendix:super-strong}.

\section{Automatic and Cayley automatic groups}
\label{sec:Auto-CGA}

We assume that the reader is familiar with the notions of regular languages, finite automata and  multi-tape synchronous automata.  For more details, we refer the reader to \cite{Epsteinbook}. We say a language $L\subseteq (X^*)^n$ is {\em regular}  if it is accepted by a synchronous $n$-tape automaton where $n\in\N$ and $X$ is a finite set, or {\em alphabet}.

For any group $G$ with finite symmetric generating set $S=S^{-1}$, let $\pi\colon S^*\to G$ denote the canonical projection map. For $w\in S^*$ let $|w|_S$ denote the length of $w$ as a word in the free monoid $S^*$. 

For any $M,N\in \N$, let $[M,N]=\{n\in\N \mid M\leq n\leq N\}$ and $[M,\infty)=\{n\in\N \mid n\geq M\}$.

\subsection{Automatic and Cayley automatic groups}

We define automatic and Cayley automatic groups, and provide some standard lemmas on the invariance of the  Cayley automatic structure under change of generating set.

\begin{definition}
\label{def:aut}
An {\em automatic structure} for a group $G$ is a pair $(S,L)$ where
\be
\item $S$ is a finite symmetric generating set for $G$;
\item $L\subseteq S^*$ is a regular language;
\item  $\pi|_L \colon L \rightarrow G$ is
a bijection;
\item for each $a \in S$ the binary relation
$$R_a = \{(u,v) \in L \times L \mid \pi(u)a=_G\pi(v)\} \subseteq S^* \times S^*$$
is regular, that is, recognized by a two-tape synchronous automaton.
\ee
A group is called {\em automatic} if it has an automatic structure with respect to some finite generating set.
\end{definition} 
It is a standard result, see, for example \cite[Theorem 2.4.1]{Epsteinbook}, that if $G$ is automatic  then $G$ has an automatic structure with respect to any finite generating set.

Cayley automatic groups were introduced in \cite{KKMjournal} with the motivation of
allowing the language $L$ of normal forms representing group elements to be defined over a
symbol alphabet $\Lambda$ rather than a generating set $S$ for $G$.

\begin{definition}
\label{def:Caut}
A {\em Cayley automatic structure} for a group  $G$ is a 4-tuple  $(S,\Lambda, L,\psi)$ where
\be
\item $S$ is a finite symmetric generating set for $G$;
\item  $\Lambda$ is an alphabet and
 $L \subseteq \Lambda^*$  is a regular
language;
\item
$\psi\colon L \rightarrow G$ is a bijection;
\item
for each $a \in S$
the binary relation
$$R_a = \{(u,v) \in L \times L \,
|\,\psi(u)a=_G\psi(v)\} \subseteq \Lambda^* \times \Lambda^*$$
is regular, that is, recognized by
a two-tape synchronous automaton. \ee

A group is called {\em Cayley automatic} if it has a Cayley automatic structure   $(S,\Lambda, L,\psi)$  with respect to some finite generating set $S$.\end{definition}
As for automatic groups, if $G$ has a  Cayley automatic structure    $(S,\Lambda, L,\psi)$
and $Y$ is another finite generating set for $G$, then there exists a Cayley automatic structure    $(Y,\Lambda_Y, L_Y,\psi_Y)$ for $G$. See \cite[Theorem 6.9]{KKMjournal} for a proof of this fact; we sharpen this in Proposition~\ref{prop:modifyingCAstructure} below.

Note that a Cayley automatic structure $(S,S,L,\pi|_L)$ for $G$, that is, one in which the symbol alphabet is in fact a generating set, and the natural projection gives a bijection from $L$ to $G$, is simply an automatic structure for $G$.

{\em A priori} the symbol alphabet $\Lambda$ has no relation to a generating set for $G$. 
However  it is straightforward to show that if $(S,\Lambda,L, \psi)$ is a Cayley automatic structure for  $G$, then there exists another Cayley automatic structure  $(S',S',L, \psi)$ for $G$ where $S'=\Lambda\cup\Lambda^{-1}\cup S$, and so we can always associate a Cayley graph with respect to a generating set which includes symbol letters from the Cayley automatic structure.  This is proven in \cite{measuringcloseness} and in Proposition~\ref{prop:modifyingCAstructure} below.
In this case, a word $w\in L$ labels a path from $1_G$ to $\pi(w)$ in the Cayley graph $\Gamma(G,S)$.
It is crucial to note that in general $\pi(w)\neq \psi(w)$.

\begin{definition}
\label{def:h}
Let $(G,S)$ be a group with Cayley automatic structure $(S,S, L,\psi)$.  The {\em \distfun}  corresponding to $\psi$ is defined to be
$$ h_{S,\psi}(n) = \max \{ d_S (\pi (w),\psi(w)) \,| \, w \in L^{\leqslant n}\}$$
where $d_S$ is the word  metric on $G$ with respect to $S$ and
$$L^{\leqslant n} = \{ w \in L \, |\, |w|\leqslant n\}.$$
\end{definition}

Let $ \F$ be the following set of non–decreasing functions:
\[ \F=\{f \colon [N,\infty)\to \Rplus \mid N\in\N\wedge \forall n (n\in \mathrm{dom} f \Rightarrow f(n)\leq f(n+1))\}.\]
Note that if
$G$ is a group with Cayley automatic structure $(S,S,L,\psi)$ and \distfun\ $h_{S,\psi}$, then  $h_{S,\psi}\in \F$.

We introduce the following partial order on $\F$.

\begin{definition}\label{defn:equivF} Let $f,g \in   \F$. We say that $g \preceqF f$ if there exist positive integers $K,M$ and $N$ such that $[N,\infty) \subseteq
\mathrm{dom}\,g\cap \mathrm{dom}\, f$ and $g(n)\leq Kf(Mn)$ for every integer $n\geq N$.
We say that $g \approxF f$ if $g \preceqF f$ and $f \preceqF g$.
\end{definition}
The subscript in Definition~\ref{defn:equivF} serves to distinguish this equivalence from the equivalence on Dehn functions discussed in \S\!~\ref{subsec:Dehn}.  It is clear from the definition that both $\preceqF$ and $\approxF$ are  transitive.

\begin{definition}
\label{def:f-separated}
Let $f,g\in \F$, and let $G$ be a Cayley automatic
group.
 We say that $g$ is {\em $f$-separated} if $f\preceqF g$, and that 
 $G$ is {\em $f$-separated} if for every 
Cayley automatic structure $(S,S,L,\psi)$ on $G$ with distance function $h_{S,\psi}$, $h_{S,\psi}$ is $f$-separated.
\end{definition}
Note that since $\preceqF,\approxF$ are both transitive, if $G$ is $f$-separated and $h\approxF f$ then $G$ is $h$-separated also.

Let $\mathbf z$ denote the zero function $\mathbf z (n)=0$ on
some domain $[N,\infty)$, and $\ii$ the function $\ii(n)=n$ on some domain $[N,\infty)$.
We note that if $f\in\F$ and $f(n)=0$ for infinitely many values of $n\in\dom f$ then $f=\mathbf z$ on its domain, because $f\in\F$ is non-decreasing.

The next lemma will be used repeatedly in the proofs below, and is a fact about certain types of functions which are easily seen to be related or equivalent under the definition above.

\begin{lemma}
\label{lemma:lose_the_constant}
Let $A,B,C,D\in\R$ with $A, D\geq 1$ and $B,C\geq 0$.
Let $f,g \in \F$ with $f(n) \leq D g(An+B)+C$.
If $g\neq \mathbf z$, then $f(n) \preceqF g(n)$.  Moreover, if $h(n)=Df(An+B)+C$
and $f\neq \mathbf z$,
then  $h \in \F$ and $h\approxF f$.
\end{lemma}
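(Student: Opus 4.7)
The plan is to absorb the additive constants $B$ (inside the argument) and $C$ (outside) into the multiplicative constants from Definition~\ref{defn:equivF}. The former is handled using monotonicity of $g$, and the latter uses the hypothesis $g\neq \mathbf{z}$, which forces $g$ to be eventually bounded below by a positive constant.

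For the first claim, I would first observe that since $A\geq 1$ and $B\geq 0$ we have $An+B\leq (A+B)n$ for every $n\geq 1$, so by monotonicity of $g$ and setting $M=\lceil A+B\rceil$, we get $g(An+B)\leq g(Mn)$ for every $n\geq 1$ large enough that the arguments lie in $\dom g$. Next, since $g\in\F$ is non-decreasing and $g\neq \mathbf{z}$, there exists $n_0\in\dom g$ with $g(n_0)>0$; consequently $g(n)\geq g(n_0)>0$ for every $n\geq n_0$, and hence $C\leq \lceil C/g(n_0)\rceil g(n)\leq \lceil C/g(n_0)\rceil g(Mn)$. Combining these two estimates with the hypothesis $f(n)\leq Dg(An+B)+C$ yields $f(n)\leq K g(Mn)$ with $K=\lceil D\rceil+\lceil C/g(n_0)\rceil$ for all integers $n$ beyond some threshold $N$, which is exactly $f\preceqF g$.

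For the second claim, $h\in\F$ follows from the monotonicity of $f$ together with $A\geq 1$ and $D\geq 1$, after choosing the domain of $h$ large enough that $An+B$ always lies in $\dom f$. The relation $h\preceqF f$ is then an immediate application of the first claim with $g$ replaced by $f$, using $f\neq \mathbf{z}$. For the reverse direction, the assumptions $A\geq 1$, $B\geq 0$, $D\geq 1$ and $C\geq 0$ together with monotonicity of $f$ give $h(n)=Df(An+B)+C\geq Df(n)\geq f(n)$ on the common domain, so $f\preceqF h$ with $K=M=1$. The one subtlety is bookkeeping: the constants $K,M,N$ of Definition~\ref{defn:equivF} must be positive integers, and the expression $f(An+B)$ needs parsing when $An+B$ is not an integer. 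I expect to handle both by replacing real constants with their ceilings and implicitly evaluating $f$ at $\lceil An+B\rceil$, which is consistent with the non-decreasing hypothesis and does not disturb any of the inequalities above.
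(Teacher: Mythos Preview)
Your argument is correct and proceeds along the same conceptual line as the paper: absorb the additive constant $B$ inside the argument via monotonicity, and absorb the additive constant $C$ outside using the hypothesis $g\neq\mathbf z$. The difference is purely in execution. The paper splits into cases according to whether $g$ is bounded or unbounded: in the unbounded case it uses $An+B\leq 2An$ for large $n$ and then picks $N_1$ so that $g(2An)\geq C$, obtaining $f(n)\leq (D+1)g(2An)$; in the bounded case it argues directly that any two bounded nonzero functions in $\F$ are $\approxF$-equivalent. Your approach avoids this dichotomy entirely by noting that $g\neq\mathbf z$ already forces $g(n)\geq g(n_0)>0$ eventually, so $C$ can be bounded by a fixed multiple of $g(Mn)$ regardless of whether $g$ is bounded. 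This yields a uniform one-line estimate and is arguably cleaner; the paper's version has the minor advantage of making the bounded case explicit, which foreshadows the remark immediately following the lemma about the equivalence class of bounded functions. Your care with ceilings to ensure the constants $K,M$ are integers, and your remark about parsing $f(An+B)$ when the argument is non-integral, address bookkeeping points that the paper's proof leaves implicit.
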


\begin{proof}
If $g$ is bounded, so $g(n)\leq E$ for all $n\in \dom\,g$ for some fixed constant $E$, then 
$f$ is bounded as well.
Since $g(n)=0$ for at most finitely many values of $n\in\dom g$, we have $f(n) \preceqF g(n)$ (possibly increasing $N_0$).
If $f$ is bounded  and  $f(n)=0$ for at most finitely many values of $n\in\dom f$,  it follows immediately that $h\approxF f$.

For the remainder of the proof, we assume that $g$ is not bounded.
There is a constant $N_0$ so that for $n \geqslant N_0$ we have $An \geqslant B$.  As $g \in \F$, it follows that for $n \geqslant N_0$ we have $D g(An+B) +C\leq D g(2An)+C$.
Since $g$ is not bounded, there is a constant $N_1$ so that for $n \geq N_1$ we have $g(2An) \geqslant C$.
Then for $n \geq \max(N_0,N_1)$ we have $f(n) \leq D g(An+B)+C \leq D g(2An) +C \leq (D+1) g(2An)$ and thus $f \preceqF g$.

Letting $g=f \in \F$ the above reasoning shows that
$h \preceqF f$. As it is clear that
$h(n)=Df(An+B)+C \in \F$ and $f \preceqF h$,
it follows that $f \approxF h$, as desired.
\end{proof}

Note that $\approxF$ defines an equivalence relation
on the set $\F$ and $\preceqF$ then gives a partial ordering on the resulting set of equivalence classes.
The poset of equivalence classes of elements of
$\F$ has a minimal element $[\mathbf z]$.
It follows from the previous lemma that all bounded
functions  $f\in\F$ for which $f(n)=0$ for at most finitely many values of $n\in\dom f$
are in the same equivalence class.
Furthermore, every $f \in \F$ can be compared to a constant function. In contrast, we show that the partial ordering is not a linear ordering, that is, there are functions in $\F$ which cannot be compared to the identity function $\ii$ under $\preceqF$.

\begin{lemma}\label{lemma:comp_to_constant}
Let $c \in \N$ and $g: \N \rightarrow \N$ the constant function $g(n)=c$.  Let $f \in  \F$ be any function.  Then either $f\preceqF g$ or $g\preceqF f$.
\end{lemma}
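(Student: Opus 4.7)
The plan is a straightforward case split on whether $f$ is bounded or unbounded. The key observation is that monotonicity of $f \in \F$ turns ``unbounded'' into ``eventually exceeding every threshold'', and that is precisely what Definition~\ref{defn:equivF} requires to dominate a constant function from below.

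First I would dispose of the degenerate case $c = 0$: then $g = \mathbf z$ and the inequality $g(n) \leq K f(Mn)$ holds for any choice of $K$, $M$, $N$, so $g \preceqF f$ automatically. So assume $c \geq 1$. Next, if $f$ is bounded above by some constant $B$ on its domain, I would take $K = B$ and $M = 1$ (with $N$ chosen so $[N,\infty)$ lies in the domain of both $f$ and $g$); then $f(n) \leq B \leq Bc = K g(Mn)$, proving $f \preceqF g$.

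Finally, if $f$ is unbounded then, because $f$ is non-decreasing and real-valued, for the given constant $c$ there must exist some $N$ with $f(n) \geq c$ for all $n \geq N$. Taking $K = M = 1$ in the definition gives $g(n) = c \leq f(n) = K f(Mn)$ on $[N,\infty)$, so $g \preceqF f$. These cases exhaust all $f \in \F$ and $c \in \N$.

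I don't foresee any real obstacle; the argument is bookkeeping once one notices that the apparently missing ``middle'' case — $f$ unbounded yet never reaching $c$ — is ruled out by the fact that membership in $\F$ forces $f$ to be non-decreasing. This is also why the analogous statement with $g$ replaced by a non-constant $\ii$ fails, as the paragraph preceding the lemma already warns.
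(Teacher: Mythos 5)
Your proof is correct and follows essentially the same route as the paper's: a case split on whether $f$ is bounded, with monotonicity of $f \in \F$ guaranteeing that an unbounded $f$ eventually exceeds $c$, hence $g \preceqF f$. The only difference is your explicit (and harmless) handling of the degenerate case $c=0$, which the paper's proof implicitly excludes by dividing by $c$.
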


\begin{proof}
If
there is some $D\in \Rplus$ so that $f(n)\leq D=\left(\frac{D}{c}\right)c=\left(\frac{D}{c}\right)g(n)$ for all $n\in\dom f$ then $f\preceqF g$.
If not, then for all $D\in \Rplus$ there is an integer $N_D\in\dom f$ so that $f(n)>D$ for $n>N_D$. In particular, there is an integer $N_c \in \dom f$ so that $f(n)>c=g(n)$ for all $n\in \dom f\cap [N_c,\infty)$.  Thus $g\preceqF f$.
\end{proof}

Lemma~\ref{lem:example-incompariable} demonstrates that not every function $f\in \F$ is $\ii$-separated.

\begin{lemma}\label{lem:example-incompariable}
There exists a function $f\in  \F$  so that $\ii\npreceqF f$ and $f\npreceqF \ii$.
\end{lemma}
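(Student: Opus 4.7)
The plan is to construct a step function $f\in\F$ whose values oscillate so as to be much larger than linear on a sparse sequence of input points and much smaller than linear across the long plateaus in between. Since $f$ must be non-decreasing, we cannot decrease $f$ after a large value has been attained; instead the sub-linear behaviour is achieved by letting $n$ run through a long plateau on which $f$ stays fixed. Concretely, I would fix some $n_0\geq 2$, define inductively $n_{k+1}=n_k^3$, and set $f\colon[n_0,\infty)\to\Rplus$ by $f(n)=n_k^2$ for every $n\in[n_k,n_{k+1})$. Monotonicity follows from $n_k^2<n_{k+1}^2$, so $f\in\F$.

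To verify $f\npreceqF \ii$, evaluate at $n=n_k$: here $f(n_k)/n_k=n_k\to\infty$, so for any constants $K,M\in\N$ the inequality $f(n)\leq K(Mn)$ fails at $n=n_k$ for all sufficiently large $k$. To verify $\ii\npreceqF f$, given $K,M\in\N$, I would take $n=\lfloor n_{k+1}/M\rfloor$ for $k$ large enough that $n_k\geq M$. Then $Mn\in[n_k,n_{k+1})$, so $f(Mn)=n_k^2$, whereas $n\geq n_k^3/M-1$. Hence
\[
\frac{n}{f(Mn)}\geq \frac{n_k}{M}-\frac{1}{n_k^2},
\]
which eventually exceeds $K$, so the inequality $n\leq K f(Mn)$ fails for infinitely many $n$.

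The only subtle point is making sure that $Mn$ still lands inside the plateau $[n_k,n_{k+1})$ once $M$ is taken into account; this is guaranteed by the super-linear gap $n_{k+1}=n_k^3$, because $M$ is a fixed constant chosen before $k$, so eventually $n_{k+1}/M\geq n_k$. No harder obstacle arises; the construction is essentially a careful tuning of the ratio between the plateau height $n_k^2$ and the plateau length $n_{k+1}-n_k\approx n_k^3$, so that the first forces super-linear growth at $n_k$ while the second forces sub-linear behaviour just below $n_{k+1}$.
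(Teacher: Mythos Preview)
Your construction is correct in spirit and essentially the same approach as the paper's: build a step function that is super-linear at the left endpoint of each plateau and sub-linear by the right endpoint. The paper uses $n_{i+1}=n_i^2$ with an alternating two-height pattern, while you use $n_{k+1}=n_k^3$ with single plateaus of height $n_k^2$; these are only cosmetic differences, and your version is arguably tidier.

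There is one small slip in the $\ii\npreceqF f$ direction. You take $n=\lfloor n_{k+1}/M\rfloor$ and then assert $Mn\in[n_k,n_{k+1})$. But if $M\mid n_{k+1}$ then $Mn=n_{k+1}$ lands on the \emph{next} plateau, giving $f(Mn)=n_{k+1}^2$ rather than $n_k^2$, and your ratio computation collapses. This is not a rare accident: with $n_0=2$ and $M=2$, every $n_{k+1}=2^{3^{k+1}}$ is divisible by $M$. The fix is to set $n=\lfloor (n_{k+1}-1)/M\rfloor$, which forces $Mn\leq n_{k+1}-1<n_{k+1}$; the paper does exactly this (evaluating at $n_{2i+1}-1$). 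With that adjustment your bound
\[
\frac{n}{f(Mn)}\;\geq\;\frac{n_k}{M}-\frac{1}{n_k^2}\;\longrightarrow\;\infty
\]
goes through unchanged.
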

\begin{proof}
 Let $n_0 =2$ and define the infinite sequence of integers $n_{i+1}=n_i^2 = 2^{2^i}$.
 Consider the step function $f\colon\N\to \Rplus$
 defined by
\[f(x)= \left\{\begin{array}{lll}
n_{2i} &n_{2i} \leq x < n_{2i+1},\\
n_{2i+2} &n_{2i+1} \leq x < n_{2i+2}.\\
\end{array}  \right.\]

Suppose $f\preceqF \ii$. Then $\exists N_0, K,M$ so that $f(x)\leq K\ii(Mx)= KMx$ for all $x\geq N_0$.
However,
\[ f(n_{2i+1})=n_{2i+2}=n_{2i+1}^2\leq KM n_{2i+1}\]
which implies that
$n_{2i+1}\leq KM $ for sufficiently large $i$, a contradiction.
Thus $f\npreceqF \ii$.

Conversely suppose $\ii\preceqF f$. Then $\exists N_0, K,M$ so that $x\leq Kf(Mx)$ for all $x\geq N_0$. This means $\frac{s}{M}\leq Kf(s)$ for all $s=Mx\geq MN_0$, which
 implies that $M\lfloor\frac{s}{M}\rfloor\leq KMf(s)$ for all $s\geq MN_0$.
However,
\[M \left\lfloor\frac{n_{2i}^2-1}{M} \right\rfloor=
M \left\lfloor \frac{n_{2i+1}-1}{M} \right\rfloor \leq KMf(n_{2i+1}-1)=KMn_{2i}
\]
and thus $\left\lfloor \frac{n_{2i}^2-1}{M} \right\rfloor \leq Kn_{2i}$.
Therefore,
$\frac{n_{2i}^2-1}{M} \leqslant K n_{2i} + 1$,
so $n_{2i} \leqslant KM + \frac{M+1}{n_{2i}}$
which is a contradiction for sufficiently large $i$.
Thus $\ii\npreceqF f$.
\end{proof}

\subsection{Invariance under change of generating set and change of structure}

Here we describe how robust both the Cayley automatic structure and the function $h_{S,\psi}$ are to, respectively, change in generating set and change in structure.
First we recall the following standard fact.
\begin{lemma}\label{lem:2tape}
Let $G$ be a Cayley automatic group and $S$ a  finite symmetric generating set for $G$.
Let $(S,\Lambda,L,\psi)$ be a Cayley automatic structure for $G$.
Then for any $w\in S^*$,
\[L_w=\{(u,v)\in L^2\mid \psi(v)=_G\psi(u)w\}\] is regular.
\end{lemma}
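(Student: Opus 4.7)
The plan is to prove the lemma by induction on $k = |w|_S$, using the standard closure properties of regular synchronous multi-tape relations (under cylindrification, intersection, and existential projection).

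First I would dispose of the base cases. For $k = 0$ the word $w = \epsilon$ represents the identity, so $L_\epsilon = \{(u,v) \in L^2 \mid \psi(u) = \psi(v)\}$. Since $\psi|_L$ is a bijection, this coincides with the diagonal $\{(u,u) \mid u \in L\}$, which is recognized by a synchronous $2$-tape automaton that runs the automaton for $L$ on both tapes in lockstep and rejects whenever the two symbols disagree or one tape ends before the other. The case $k = 1$ is immediate, since $L_a = R_a$ is regular by definition of the Cayley automatic structure.

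For the inductive step, write $w = w' a$ with $|w'| = k-1$ and $a \in S$, so by the inductive hypothesis $L_{w'}$ is regular and $R_a$ is regular. The key observation is that $(u, v) \in L_w$ if and only if there exists $z \in L$ with $(u, z) \in L_{w'}$ and $(z, v) \in R_a$. Let $T = \{(u, v, z) \in L^3 \mid (u, z) \in L_{w'} \text{ and } (z, v) \in R_a\}$. Each defining condition is obtained by cylindrifying a regular synchronous $2$-tape relation to a regular synchronous $3$-tape relation (by ignoring the spare tape), so their intersection $T$ is regular. Then $L_w$ is the projection of $T$ onto its first two coordinates.

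The hard part is ensuring that this projection yields a regular synchronous $2$-tape relation, since the intermediate word $z$ may have length very different from those of $u$ and $v$. The standard fix is to build a nondeterministic $2$-tape automaton that guesses the symbols of the $z$-tape as it reads $(u, v)$, supplying the padding symbol on tapes $1$ and $2$ once those inputs are exhausted so that the $3$-tape transitions of the automaton for $T$ can still be simulated (and analogously handling the case where $z$ finishes first). Determinizing this gives a synchronous $2$-tape automaton recognizing $L_w$. This projection-preserves-regularity argument is standard in the theory of automatic (FA-presentable) structures, and completes the induction.
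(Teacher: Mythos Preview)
Your proof is correct and uses essentially the same idea as the paper: cylindrify the binary multiplier relations to a higher-arity synchronous relation, intersect, and project. The only difference is presentational---the paper does this in one shot by building an $(n{+}1)$-tape automaton encoding all intermediate words $u_0,\dots,u_n$ simultaneously and then projecting onto the first and last coordinates, whereas you achieve the same thing inductively via a $3$-tape relation at each step. Your treatment of the projection step (handling the mismatched lengths of the existentially-quantified tape) is more explicit than the paper's, which simply appeals to the closure of synchronous regular relations under projection.
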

\begin{proof}
Let $w=s_1\dots s_n$ where $s_i\in S$ for $1 \leq i \leq n$.
As $(S,\Lambda,L,\psi)$ is a Cayley automatic structure for $G$, for each $s \in S$ there is a synchronous 2-tape automaton $\texttt{M}_{s}$  which accepts the language $$L(\texttt{M}_{s})=\{(u,v)\in L^2\mid \psi(v)=_G\psi(u)s\}.$$
Let $\texttt{M}_i'$ be a synchronous  $(n+1)$-tape automaton accepting  \[(z_0, \dots, z_{i-1}, u, v, z_{i+2},\dots, z_n)\] where $z_j\in \Lambda^*, j\in[0,i-1]\cup[i+1,n]$, $u,v\in L$ and  $\psi(v)=\psi(u)s_i$.
We construct $\texttt{M}_i'$ from $\texttt{M}_{s_i}$ by replacing each edge labeled $(a,b)\in \Lambda^2$ by the finite number of edges
labeled $(x_0,\dots x_{i-1},a,b,x_{i+2},\dots ,x_n)\in \Lambda^{n+1}$ for all possible choices of $x_j \in \Lambda$
where $0\leq j \leq i-1$ and $i+2 \leq j \leq n$.

Then \[L_w=\bigcap_{i=1}^n  L(\texttt{M}_i')\] which is regular since this is a finite intersection, then apply a homomorphism to project onto the first and last factors.
\end{proof}

\begin{proposition}\label{prop:modifyingCAstructure}
Let $G$ be a Cayley automatic group and $S$ a  finite symmetric generating set for $G$.
\be\item If  $(S,\Lambda,L, \psi)$ is a Cayley automatic structure for  $G$, then so is $(S',S',L, \psi)$ where $S'=\Lambda\cup\Lambda^{-1}\cup S$

\item If $(S,S,L, \psi)$ is a Cayley automatic structure for  $G$ with \distfun\ $h_{S,\psi}$, and $Y$ is a   finite symmetric generating set for $G$,  then there exists a language $L'\subseteq Y^*$ and a bijection $\psi'\colon L'\to G$ so that
 $( Y, Y,L',\psi')$ is a Cayley automatic structure for  $G$ with \distfun\  $h_{ Y,\psi'}\approxF h_{S,\psi}$.
\ee
 \end{proposition}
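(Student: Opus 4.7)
For part (1), the plan is to reinterpret each alphabet symbol as an arbitrary group element and then verify regularity of the new transition relations using Lemma~\ref{lem:2tape}. Fix any assignment $a \mapsto g_a \in G$ for $a \in \Lambda$ and set $S' = \{g_a^{\pm 1} : a \in \Lambda\} \cup S$, identifying each symbol $a$ with $g_a$. Then $S'$ is a finite symmetric generating set (since $S$ already generates), $L \subseteq (S')^*$, and $\psi \colon L \to G$ remains a bijection. For $a \in S$, regularity of $R_a$ is given; for a new generator, write $a$ as a word $w_a \in S^*$, and then $R_a = L_{w_a}$ is regular by Lemma~\ref{lem:2tape}.

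For part (2), the plan is to transport $L$ to a language over $Y^*$ via a carefully chosen monoid homomorphism $\tau \colon S^* \to Y^*$ satisfying (a) $\pi_Y(\tau(s)) = s$ for every $s \in S$; (b) all of the words $\tau(s)$ have a common length $K$; and (c) $\tau$ is injective on $S^*$. Assume $|Y| \geq 2$ (otherwise $G$ is virtually cyclic and hence automatic, so the conclusion is immediate). Fix distinct $y_0, y_1 \in Y$ and assign to each $s \in S$ a distinct binary code $b_s \in \{y_0, y_1\}^n$ with $n = \lceil \log_2 |S| \rceil$. For each $s$ choose $v_s \in Y^*$ with $\pi_Y(v_s) = s$, adjusted by appending $y_0 y_0^{-1}$ pairs if necessary so that all $|v_s|$ have a common parity. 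Choose $K \geq 2n + \max_s |v_s|$ so that $K - 2n - |v_s|$ is even for every $s$, and set
\[
 u_s \;=\; b_s\, b_s^{-1}\, v_s\, (y_0\, y_0^{-1})^{(K - 2n - |v_s|)/2}.
\]
Then $|u_s| = K$, $\pi_Y(u_s) = s$, and the first $n$ letters of $u_s$ uniquely identify $s$, giving all three of (a)--(c). Extend $s \mapsto u_s$ to $\tau$ by concatenation, define $L' = \tau(L) \subseteq Y^*$ (regular as a homomorphic image of a regular language), and set $\psi' = \psi \circ (\tau|_L)^{-1}$, which is a bijection $L' \to G$ by (c).

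To see that $R_y$ is regular for $y \in Y$, write $y = \pi_S(z_y)$ with $z_y \in S^*$; then $R_y$ is the image under $(\tau,\tau)$ of $L_{z_y}$, which is regular by Lemma~\ref{lem:2tape}. A two-tape synchronous automaton for $R_y$ reads both tracks in length-$K$ blocks via a finite-state buffer, decodes each block to its unique $S$-letter using the prefix code $b_s$, and feeds the decoded pair into the automaton for $L_{z_y}$; because the block length is uniform, the convolution $\tau(u) \otimes \tau(v)$ decomposes into a $K$-scaled copy of $u \otimes v$, so the two decoded tracks stay synchronized and the convolution padding transfers correctly. For the distance function, (a) gives $\pi_Y(\tau(w)) = \pi_S(w)$, hence $d_Y(\pi_Y(\tau(w)), \psi'(\tau(w))) = d_Y(\pi_S(w), \psi(w))$; combining this with the bi-Lipschitz equivalence of $d_Y$ and $d_S$ on $G$ and the exact scaling $|\tau(w)| = K|w|$, Lemma~\ref{lemma:lose_the_constant} yields $h_{Y,\psi'} \approxF h_{S,\psi}$. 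The main obstacle is property (b): without a uniform block length the two tracks would decode at different rates and the convolution structure would not transfer, which is why we force $|u_s| = K$ via the padding by $(y_0 y_0^{-1})^\bullet$.
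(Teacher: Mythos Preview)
Your proof is correct, and for part (2) it takes a genuinely different route from the paper. The paper chooses for each $s \in S$ an arbitrary word $u_s \in Y^*$ with $\pi(u_s)=s$, sets $\rho(s)=u_s$, $L'=\rho(L)$ and $\psi'=\psi\circ(\rho|_L)^{-1}$, and then devotes all of its effort to the two inequalities $h_{S,\psi}(n)\le M_1 h_{Y,\psi'}(M_2 n)$ and $h_{Y,\psi'}(n)\le M_2 h_{S,\psi}(M_1 n)$ via a change-of-Cayley-graph argument; it does not address the injectivity of $\rho|_L$ or the construction of the synchronous automata for the relations $R_y$, implicitly deferring these to \cite[Theorem~6.9]{KKMjournal}. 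Your construction instead forces $\tau$ to have uniform block length $K$ with an injective prefix code, which buys you two things the paper leaves implicit: a transparent reason why $\psi'$ is well-defined (injectivity of $\tau$ on all of $S^*$), and a concrete recipe for a two-tape synchronous automaton recognising $R_y$ (the fixed block length keeps the decoded tracks synchronised, so the convolution structure transfers). The trade-off is a longer construction; the paper's version is shorter but less self-contained. One small quibble: in your $|Y|=1$ edge case, ``$G$ is virtually cyclic and hence automatic'' is not quite the right justification---what you actually need is that $G$ is then finite, so every Cayley distance function on $G$ is bounded and hence they are all $\approxF$-equivalent.
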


\begin{proof}
\textit{(1)}  Suppose $\langle S\mid R\rangle$ is a presentation for $G$.
For each $a\in \Lambda$ choose an element $g_a\in G$, and  choose a word $u_a\in S^*$ with $\pi(u_a)=g_a$.
Note that this choice is arbitrary; the element $g_a$ corresponding to the symbol letter $a$ could be any group element.
Let $\Lambda^{-1}$ be the disjoint set $\{a^{-1}\mid a\in \Lambda\}$; we will not use these letters, but include them to ensure our new generating set is symmetric.
Since $\Lambda$ is finite, there is a bound on the length of all $u_a$ words.
We have  $S'=\Lambda\cup\Lambda^{-1}\cup S$ and
 $G$ is presented by $\langle S'\mid R\cup\{a=u_a\mid a\in\Lambda\}\rangle$.

With this new generating set, we have the same language $L$  which is regular, and the map $\psi\colon L\to G$.
For each $s\in S$ there is an automaton $\texttt{M}_s$ recognizing multiplication by $s$, and it follows from Lemma~\ref{lem:2tape} that there is an analogous 2-tape automaton $\texttt{M}_a$ for each $a\in \Lambda^{\pm 1}$.

\medskip
\noindent
\textit{(2)}
 We have $L\subseteq S^*$ is a regular language in bijective correspondence with $G$. For each $s\in S$, choose a word $u_s\in Y^*$ with $s=_Gu_s$ and for each $y\in Y$, choose a word $v_y\in S^*$ with $y=_Gv_y$. Let $M_1=\max\{|v_y|_S\mid y\in Y\}$ and
 $M_2=\max\{|u_s|_Y\mid s\in S\}$.

 Let the monoid homomorphism $\rho\colon S^*\to Y^*$ be defined by $\rho(s)=u_s$. It follows that $L'=\rho(L)$ is a regular language in bijection with $G$, where
 $\psi'\colon L'\to G$  defined by $\psi'=\psi\circ\left(\rho|_L\right)^{-1}$ is a bijection.
 Note that for all $w\in L$ we have $\pi(w)=\pi(\rho(w))$ and $\psi(w)=\psi'(\rho(w))$.
 For each $w\in L^{\leq n}$ we claim that
 \begin{equation}\label{eqn:inequality}
d_S\left(\pi(w), \psi(w)\right)\leq
 M_1h_{Y,\psi'}\left(M_2n\right)
 \end{equation}
 To see this, we argue as follows.
 \begin{itemize}
     \item Under $\rho$, the path labeled $w$ from $1_G$ to $\pi(w)$ in $\Gamma(G,S)$ is mapped to a path labeled $\rho(w)$ from $1_G$ to $\pi(\rho(w))=\pi(w)$ in $\Gamma(G,Y)$, and this path has length at most $M_2n$, replacing each letter $s$ of the path by $u_s$.   See  Figure~\ref{fig:quasiIsom}.
     \item By definition, the distance from $\pi(\rho(w))$ to $\psi'(\rho(w))$ in $\Gamma(G,Y)$ is at most
$h_{Y,\psi'}(M_2n)$ since this is the maximum such distance over all possible words in $(L')^{\leq M_2n}$.
\item Then in $\Gamma(G,Y)$ we have a path from $\pi(w)$ to $\psi(w)$ of length at most $h_{Y,\psi'}(M_2n)$ in the letters from $Y$; call it $\gamma$. Replacing each of these letters $y$ by $u_y$ we obtain a path $\rho^{-1}(\gamma)$ in $\Gamma(G,S)$ from $\pi(w)$ to $\psi(w)$ of length at most $M_1h_{Y,\psi'}(M_2n)$.
 \end{itemize}

\begin{figure}[h!]
\begin{subfigure}{.45\textwidth}
  \centering
 \begin{tikzpicture}[scale=1.2]

 \begin{scope}[decoration={markings,mark = at position 0.6 with {\arrow[scale=2,black]{latex}}}]

  \draw[postaction={decorate}]
    (0,0) arc (320:360:-4 and 6.0);
        \draw  (-1.05,2.1) node {$w$};

       \draw
    (-.95,5.1) node[left=1pt] {$\rho^{-1}(\gamma)$};

    \draw[
    decorate, decoration=snake]
      (-.95,3.9)  .. controls (-.7,5)  .. (-.4,5.95) ;

   \fill[radius=1.5pt,blue]
    (0,0) circle node[left=1pt] {$1_G$};

   \fill[radius=1.5pt,blue]
    (-.95,3.9) circle node[left=1pt] {$\pi(w)=\pi(\rho(w))$};

   \fill[radius=1.5pt,blue]
    (-.4,5.95) circle node[above left=1pt] {$\psi(w)=\psi'(\rho(w))$};

     \end{scope}
\end{tikzpicture}
  \caption{In $\Gamma(G,S)$}
  \label{fig:sub-first}
\end{subfigure}
\begin{subfigure}{.45\textwidth}
  \centering
\begin{tikzpicture}[scale=1.2]

 \begin{scope}[decoration={markings,mark = at position 0.6 with {\arrow[scale=2,black]{latex}}}]

   \draw
    (-.95,2.1) node[left=1pt] {$\rho(w)$};
       \draw
    (-.95,5.1) node[left=1pt] {$\gamma$};

    \draw[postaction={decorate}]
      (-.95,3.9)  .. controls (-.8,5)  .. (-.4,5.95) ;

         \draw[
         decorate,decoration=snake]
    (0,0) arc (320:360:-4 and 6.0);

   \fill[radius=1.5pt,blue]
    (0,0) circle node[left=1pt] {$1_G$};

   \fill[radius=1.5pt,blue]
    (-.95,3.9) circle node[left=1pt] {$\pi(w)=\pi(\rho(w))$};

   \fill[radius=1.5pt,blue]
    (-.4,5.95) circle node[above left=1pt] {$\psi(w)=\psi'(\rho(w))$};

     \end{scope}
\end{tikzpicture}
  \caption{In $\Gamma(G,Y)$}
  \label{fig:sub-second}
\end{subfigure}
\caption{Drawing  $w\in L$  and $\rho(w)\in L'$ in each  Cayley graph.}
\label{fig:quasiIsom}
\end{figure}
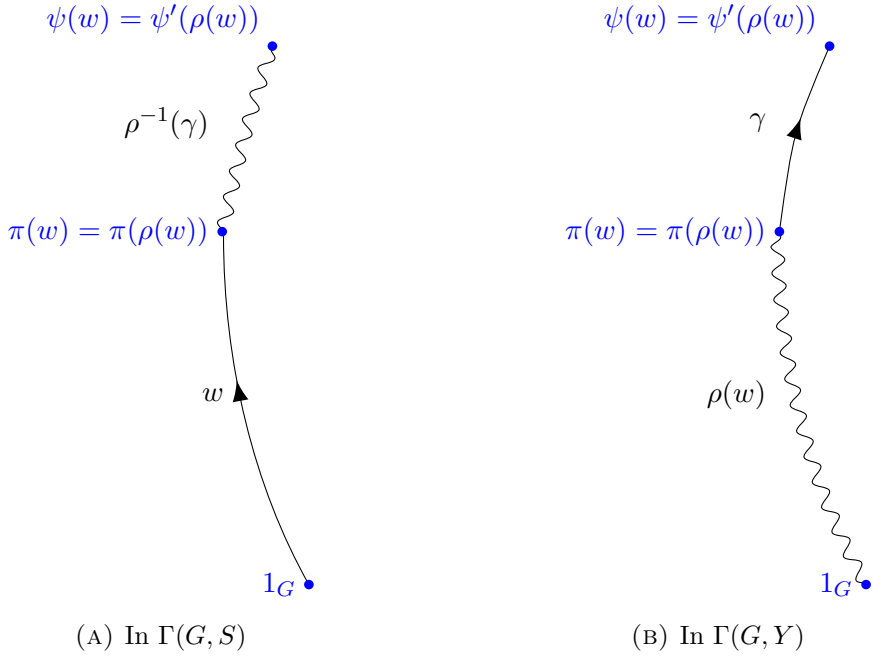

Since Equation~(\ref{eqn:inequality}) is true for all $w\in L^{\leq n}$, it follows that
 $h_{S,\psi}\preceqF h_{Y,\psi'} $.

Similarly
 for each $w'=\rho(w)\in (L')^{\leq n}$  the same argument shows that \[d_Y\left(\pi(w'), \psi'(w')\right)
 \leq
 M_2h_{S,\psi}\left(M_1n\right)
 \] as $\pi(w')=\pi(w)$ and $\psi(w)=\psi'(w')$ are the same vertices.

 Thus $h_{Y,\psi'}\preceqF h_{S,\psi}$ and it follows that  $h_{Y,\psi'}\approxF h_{S,\psi}$
\end{proof}

\begin{remark}
Note that a given group may admit many different Cayley automatic structures whose \distfun s are not equivalent under $\preceqF$.  Part (2) of Proposition~\ref{prop:modifyingCAstructure} proves that given one Cayley automatic structure for a group $G$ with respect to a generating set $S$,  we can create a new Cayley automatic structure for $G$ over a generating set $Y$ so that both \distfun s are equivalent under $\preceqF$.
\end{remark}

\subsection{Dehn functions}\label{subsec:Dehn}

Let $\mathcal P=\langle X\mid  R\rangle $ be a finite presentation of a group $G$ and $F_X$ the free group on $X$. If $w\in F_X$ is equal to $1_G$ in $G$, then
there exist $N\in \N$, $r_i\in R$ and $u_i\in F_X$ such that
\[w_{F_X}=\prod_{i=1}^N  u_i^{-1}r_i^{\epsilon_i}u_i
\]
We define the {\em area} of $w$,  denoted $A_{\mathcal P}(w)$,  to be the minimal $N\in \N$ so that $w$ has such an expression.

\begin{definition}
The {\em Dehn function} of a presentation is the function  $\delta_{\mathcal P}\colon \N\to\N$ given by
$$
\delta_{\mathcal P}(n)=\max\{A_{\mathcal P}(w)\mid  w\in F_X, w=_G1_G, |w|\le n \}
$$
\end{definition}

Note that if $f$ is a Dehn function then $f\in \F$.
It is standard to define the following partial order on Dehn functions.

\begin{definition}\label{def:dehn_equiv}
For $f,g \in \F$ we define $f \preceqD g$ if there exists a constant $C>0$ so that $f(n)\le Cg(Cn)+Cn$ for all $n\in\N$.
We write $f \approxD g$ if  $f \preceqD g$ and $g \preceqD f$.
\end{definition}

Recall that each presentation of a group $G$ can give rise to a different Dehn function.
It is a standard fact
that all Dehn functions on a group $G$ are equivalent under the relation $\approxD$.  Thus we can
consider the equivalence class of these functions as a quasi-isometry invariant of the group.  In particular,   we can refer to a group as possessing a linear, quadratic or exponential Dehn function, for example.

Recall that
there are no groups with Dehn function equivalent to $n^\alpha$ for $\alpha \in (1,2)$ \cite{Bowditch,GromovHyp, Olshanskii}.

 \section{Finite presentability and Dehn functions}\label{sec:FP-dehn}

We start with the following observation.
\begin{lemma}[\cite{KKMjournal}, Lemma 8.2; \cite{cga}, Lemma 8]\label{lem:bound1}
Let $(S,\Lambda,L,\psi)$ be a  Cayley automatic structure  for $G$. Then there are constants $m,e\in \N$, depending on the Cayley automatic structure, with $m\geq 1$ so that for each $u \in L$,
\[|u| \leq m d_S(1_G, \psi(u)) + e\]
where $d_S$ denotes the word metric in $G$ with respect to the generating set $S$.
\end{lemma}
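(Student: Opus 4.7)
The plan is to exploit the fact that for each $s\in S$ the multiplier relation $R_s$ is recognised by a synchronous $2$-tape automaton $\texttt{M}_s$, which forces any two words in $L$ related by right multiplication by $s$ to have nearly equal lengths. More precisely, I will first establish an intermediate bound: there is a constant $c\in\N$, depending only on the Cayley automatic structure, so that whenever $(u,v)\in L\times L$ satisfies $\psi(u)s=_G\psi(v)$ for some $s\in S$, we have $\bigl||u|-|v|\bigr|\leq c$. Granted this, the conclusion of the lemma follows by iterating along a geodesic of $\psi(u)$ in $\Gamma(G,S)$.

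To prove the intermediate length bound, take $c$ to be the maximum over $s\in S$ of the number of states of $\texttt{M}_s$, and suppose for contradiction that $|v|>|u|+c$ for some accepted pair $(u,v)$. Once $\texttt{M}_s$ has consumed all of $u$, the first tape feeds only the padding symbol, and over the next $|v|-|u|>c$ transitions some state must repeat. Pumping out the resulting loop produces a strictly shorter word $v'$ with $(u,v')$ still accepted, hence $\psi(u)s=_G\psi(v')$. Since $\psi|_L$ is a bijection and $\psi(v')=\psi(u)s=\psi(v)$, we conclude $v'=v$, a contradiction. The symmetric inequality $|u|\leq |v|+c$ is handled identically, using padding on the second tape. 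The bijectivity of $\psi|_L$ (rather than the mere regularity of $R_s$) is the essential ingredient here, and this is the only non-mechanical step of the proof.

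Finally, given $u\in L$ with $\psi(u)=g$ and $k=d_S(1_G,g)$, I fix a geodesic $s_1\cdots s_k$ representing $g$ and let $u_i\in L$ be the unique word with $\psi(u_i)=s_1\cdots s_i$ for $0\leq i\leq k$, so that $u_0$ represents $1_G$ and $u_k=u$. Applying the step bound to each consecutive pair $(u_{i-1},u_i)$ and telescoping yields $|u|\leq |u_0|+ck=c\,d_S(1_G,\psi(u))+|u_0|$. Setting $m=\max\{c,1\}$ and $e=|u_0|$ (a constant depending only on the structure) completes the proof. The main obstacle is the pumping-plus-bijectivity argument; everything else is routine bookkeeping once that step is in hand.
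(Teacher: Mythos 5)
Your proposal is correct and follows essentially the same route as the paper: the constant $m$ is the maximum number of states over the multiplier automata, the step bound $\bigl||u_i|-|u_{i-1}|\bigr|\leq m$ is obtained by pumping the padded suffix and invoking the bijectivity of $\psi|_L$, and the conclusion follows by telescoping along a geodesic with $e=|u_0|$. Your write-up actually spells out the pumping-plus-bijectivity step in more detail than the paper does.
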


\begin{proof}
For each $x\in S$ let $\texttt{M}_x$ be a synchronous 2-tape automaton accepting
\[\{(u,v)\in L^2\mid \psi(v)=_G\psi(u)x\},\]
and let $|\texttt{M}_x|$ denote the number of states in $\texttt{M}_x$, and
$m=\max\{|\texttt{M}_x|\mid x\in S\}$. Let $u_0\in L$ be such that $\psi(u_0)=1_G$, and $e=|u_0|$.

For  $u\in L$ let
$x_1\dots x_k$ be a geodesic  for $\psi(u)$ where $x_i\in S$, so $k={d_S(1_G,\psi(u))}$. Define $u_i\in L$ by $\psi(u_i)=_Gx_1\dots x_i$. Then for $i=1,\dots, k$ we have
 \[\left||u_i|-|u_{i-1}|\right|\leq m.\]
If this difference in length was greater than $m$, the path accepted by the two-tape automaton would end with a sequence of $\$$ symbols in one coordinate of length greater than $m$.
One could then apply the pumping lemma to this path, and contradict the fact that $\psi$ is a bijection.

It then follows from the triangle inequality that
\[\begin{array}{lll}
|u|&=&\left||u_k|-|u_{k-1}|+|u_{k-1}|-\dots -|u_1|+|u_1|-|u_{0}|+|u_0|\right|\\
&\leq &\left|\left(|u_k|-|u_{k-1}|\right)\right|+\dots +\left|\left(|u_1|-|u_{0}|\right)\right|+|u_0|\\
&\leq &m k+e\end{array}\]
 which establishes the bound.
\end{proof}

The following proposition relates the Cayley distance function to fillings of loops in the Cayley graph of a Cayley automatic group.

\begin{proposition}
\label{prop:dehnbound}
Let $(S,S,L,\psi)$ be a  Cayley automatic structure  for $G$ with \distfun\ $h_{S,\psi}$.
There exist constants
$\cA,\cB,\cC,\nD, D\in\N$, depending on the Cayley automatic structure, so that the following holds.
\begin{enumerate}\item
For every $w\in S^*$ with $w=_G1_G$ and $|w| \geq \nD$,
there exist $w_i,\rho_i\in S^*$ with $w_i=_G1_G$ for  $1\leq i\leq k$  so that \[w=_{F_S}\prod_{i=1}^k\rho_iw_i\rho_i^{-1}\ \ \ \ \
 \text{and} \ \ \ \ \ |w_i|\leq 4h_{S,\psi}(
 \cA|w|+\cB)+\cC.\]
\item
If  $G$ is finitely presented, and $\delta$ is the Dehn function with respect to a fixed presentation $\langle S\mid R\rangle$, then
\[\delta(n) \leq D n^2 \delta(f(n))\] for all $n\geq n_0$,
where $f\approxF h_{S,\psi}$.
\end{enumerate}
\end{proposition}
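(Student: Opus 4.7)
The plan is to prove part (1) by a van Kampen-style decomposition using the Cayley automatic normal forms and the bi-automata for multiplication by $s \in S$, then derive part (2) by a routine area count. Write $w = s_1 \cdots s_n$ with $s_i \in S$ and let $g_i = s_1 \cdots s_i$ denote the partial products. For each $i$, fix the normal form $u_i \in L$ with $\psi(u_i) = g_i$; by Lemma~\ref{lem:bound1}, $|u_i| \leq m|w| + e$. Since $d_S(\pi(u_i), g_i) \leq h_{S,\psi}(|u_i|)$, fix a geodesic word $\bar a_i \in S^*$ so that $\pi(u_i \bar a_i) = g_i$ and $|\bar a_i| \leq h_{S,\psi}(m|w|+e)$. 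In $\Gamma(G,S)$, consider the disk with boundary $w$ and draw an interior ``spoke'' $u_i \bar a_i$ from the basepoint $1_G$ to each boundary vertex $g_i$, subdividing the disk into $n$ sectors, each bounded by a loop
\[
L_i \;=\; u_{i-1}\, \bar a_{i-1}\, s_i\, \bar a_i^{-1}\, u_i^{-1}
\]
of length linear in $|w|$.

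Each sector $L_i$ must then be further subdivided into cells of length $O(h_{S,\psi}(|w|))$. The plan is to exploit the bi-automaton $\texttt{M}_{s_i}$ accepting $(u_{i-1}, u_i)$: at each synchronous position $k$, insert a ``rung'' between $\pi(u_{i-1}[1..k])$ and $\pi(u_i[1..k])$, yielding $O(|u_i|) = O(|w|)$ quadrilateral cells per sector. The natural short loops are quadrilaterals whose long sides are two consecutive rungs and whose short sides are single letters of $u_{i-1}$ and $u_i$; together with the terminal corrections $\bar a_{i-1}, \bar a_i$ and $s_i$ at the boundary, these should have perimeter at most $4h_{S,\psi}(\cA|w|+\cB) + \cC$ for suitable $\cA, \cB, \cC$. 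Assembling across all $n$ sectors yields $w =_{F_S} \prod_{j=1}^k \rho_j w_j \rho_j^{-1}$ with $k \leq D|w|^2$. The main obstacle is establishing the $O(h_{S,\psi}(|w|))$ bound on rung lengths: unlike in the automatic case, Cayley automatic structures do not enjoy a general fellow-traveler property on prefixes, and the intermediate elements $\pi(u_{i-1}[1..k])$ and $\pi(u_i[1..k])$ need not be close in $\Gamma(G,S)$. I expect the argument will proceed by passing through the normal form $\tilde u_k \in L$ of each intermediate element (whose length is bounded via Lemma~\ref{lem:bound1}) and writing each rung as a concatenation of two correction words of length at most $h_{S,\psi}(|\tilde u_k|)$, together with a ``canonical'' element attached to the state of $\texttt{M}_{s_i}$ whose length depends only on the automatic structure. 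Care will be needed to ensure the resulting loops do indeed assemble to $w$ in $F_S$ rather than merely in $G$.

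For part (2), with the decomposition from (1), each $w_j$ is null-homotopic of length at most $4h_{S,\psi}(\cA n + \cB) + \cC$, so has area at most $\delta(4h_{S,\psi}(\cA n + \cB) + \cC)$ in $\mathcal P$. Summing over at most $Dn^2$ conjugates gives $A_{\mathcal P}(w) \leq Dn^2 \delta(f(n))$ where $f(n) = 4h_{S,\psi}(\cA n + \cB) + \cC$, whence $\delta(n) \leq Dn^2 \delta(f(n))$. By Lemma~\ref{lemma:lose_the_constant}, $f \approxF h_{S,\psi}$, completing the bound.
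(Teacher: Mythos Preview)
Your plan is essentially the paper's: split $w$ into $n$ corridors bounded by $u_{i-1}\bar a_{i-1}\, s_i\, \bar a_i^{-1} u_i^{-1}$, subdivide each corridor by rungs at every synchronous position of $\texttt{M}_{s_i}$, and then count cells for part~(2). The one step that needs sharpening is the rung construction. If $\tilde u_k$ literally means $\psi^{-1}\bigl(\pi(u_i[1..k])\bigr)$, the normal form of the prefix endpoint, this by itself does \emph{not} give a short path from $p_{i-1,k}$ to $p_{i,k}$: knowing $\psi(\tilde u_k^{(i-1)})=p_{i-1,k}$ and $\psi(\tilde u_k^{(i)})=p_{i,k}$ says nothing about the distance between them, and no ``canonical'' bounded-length word connects two arbitrary normal forms. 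What actually works---and what your phrase ``canonical element attached to the state of $\texttt{M}_{s_i}$'' is reaching for---is a completion trick: from the state of $\texttt{M}_{s_i}$ reached after reading the first $k$ symbol pairs there is a path of length at most $m$ to an accept state, and appending its labels to the two prefixes produces a pair $x_{i-1,k},x_{i,k}\in L$ with $|x_{\cdot,k}|\leq k+m$ and, crucially, $\psi(x_{i-1,k})\,s_i=\psi(x_{i,k})$. The rung is then
\[
p_{i-1,k}\ \xrightarrow{\ \leq m\ }\ \pi(x_{i-1,k})\ \xrightarrow{\ \leq h_{S,\psi}\ }\ \psi(x_{i-1,k})\ \xrightarrow{\ s_i\ }\ \psi(x_{i,k})\ \xrightarrow{\ \leq h_{S,\psi}\ }\ \pi(x_{i,k})\ \xrightarrow{\ \leq m\ }\ p_{i,k},
\]
of total length at most $2m+2h_{S,\psi}(\cA n+\cB)+1$, and two consecutive rungs together with one letter on each side of the corridor bound a cell of perimeter at most $4h_{S,\psi}(\cA n+\cB)+\cC$. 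With this in place your decomposition and the area count for part~(2) go through exactly as you wrote. (One minor sharpening the paper uses: since $w$ is a loop, $d_S(1_G,g_i)\leq n/2$, so Lemma~\ref{lem:bound1} gives $|u_i|\leq mn/2+e$ rather than $mn+e$; this only affects the constants.)
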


Note that the constants $D,c$, and $\cC$ in the statement of the proposition depends only on the Cayley automatic structure and not on the Dehn function $\delta$.

\begin{proof}
Let $m=\max_{s\in S}\{|\texttt{M}_s|\}$ be the maximum number of states in any two-tape synchronous automaton accepting $R_s$ as in Definition~\ref{def:Caut} in the Cayley automatic structure for $G$ and  $u_0\in L$  the word representing the identity element of $G$  of length $e$ as in Lemma~\ref{lem:bound1}.
Without loss of generality we assume that $m$ is
	even, so that all arguments of the function $h_{S,\psi}$
  below are integers.

Choose a loop in $\Gamma(G,S)$ based at $1_G$ labeled by the path $w=s_1\dots s_n$ where $s_i\in S$ and $\pi(w)=1_G$.  For each $g_i=\pi(s_1\dots s_i)$ let $u_i\in L$ be such that $\psi(u_i)=g_i$.
For $1\leq i\leq n$,
as $d(1_G,g_i)\leq n/2$, it follows from Lemma~\ref{lem:bound1}  that \begin{equation}\label{eq:lemma}
    |u_i|_S\leq m n/2+e
\end{equation} so the distance from $\pi(u_i)$ to $g_i$ is at most
$h_{S,\psi}(mn/2+e)$.
Let $\gamma_i$ be a path from $\pi(u_i)$ to $g_i$ of length at most this bound.
We will describe how to fill ``corridors" having perimeter $u_i\gamma_is_i\gamma_{i+1}^{-1}u_{i+1}^{-1}$ with relators of bounded perimeter.
See  Figure~\ref{fig:filling1} for an example of such a corridor.

 \begin{figure}[h!]
\begin{tikzpicture}[scale=1.2]

 \begin{scope}[decoration={markings,mark = at position 0.5 with {\arrow[scale=2]{latex}}}]

   \draw
    (0,3) circle (3 and 3);

  \draw[postaction={decorate}]
    (0,0) arc (320:360:-4 and 6.0);
  \draw[postaction={decorate}]
    (0,0) arc (-90: 10:0.8 and 3.5);
        \draw  (-1,2) node {$u_i$};
            \draw  (1,2) node[right=-9pt]  {$u_{i+1}$};

         \draw[postaction={decorate}]  (0,6) node[above] {$s_i$};

    \draw[postaction={decorate}]
      (-.95,3.9)  .. controls (-.8,5)  .. (-.4,5.95) ;
    \draw  (-1,5) node {$\gamma_i$};
          \draw[postaction={decorate}]
        (.805,4.15)  .. controls (.7,5) .. (.4,5.95);
    \draw  (1,5) node[right=-9pt] {$\gamma_{i+1}$};

   \fill[radius=1.5pt,blue]
    (-.95,3.9) circle node[left=1pt] {$\pi(u_i)$};
       \fill[radius=1.5pt,blue]
    (.805,4.15) circle node[right=1pt] {$\pi(u_{i+1})$};

   \fill[radius=1.5pt,blue]
    (-.4,5.95) circle node[above left=1pt] {$g_i$};
       \fill[radius=1.5pt,blue]
    (.4,5.95) circle node[above right=1pt] {$g_{i+1}$};

     \end{scope}
\end{tikzpicture}
\caption{The exterior of the figure is labeled by a loop $w = s_1s_2 \cdots s_n$ with $w=_G1$.  The figure depicts a corridor whose sides are labeled by the closed path $u_i\gamma_is_i\gamma_{i+1}^{-1}u_{i+1}^{-1}$.}
\label{fig:filling1}
\end{figure}
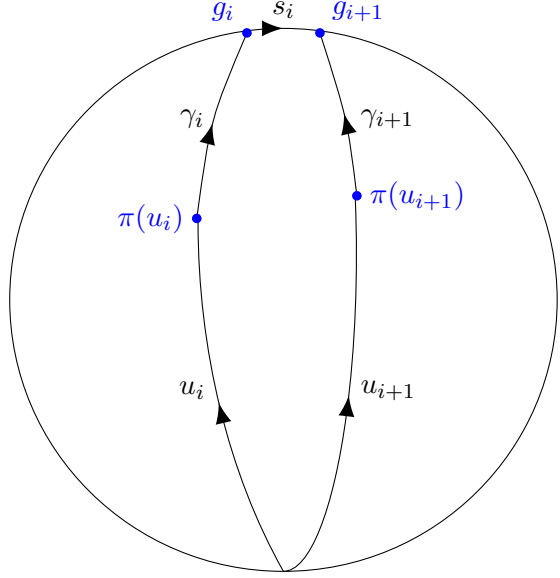

Let $u_i=a_{i,1}a_{i,2}\dots a_{i,|u_i|_S}$ and for $0\leq j\leq |u_i|_S$ define $p_{ij}=\pi(a_{i,1}a_{i,2}\dots a_{i,j})$ to be the point in $\Gamma(G,S)$ corresponding to the prefix of $u_i$ of length $j$.
If $j>|u_i|_S$ then let $p_{ij}=\pi(u_i)$.

We know that the pair $(u_i,u_{i+1})$ is accepted by $M_{s_i}$.
Consider the state  of $M_{s_i}$ which is reached upon reading the input $\{(a_{i,l},a_{i+1,l})\}_{j=1}^l \}$
where $a_{i,l},a_{i+1,l}\in S\cup\{\$\}$.
There must be a path of length at most $m$ in $M_{s_i}$ from this state to some accept state of $M_{s_i}$.
Denote the labels along this path by $\{(b_{i,j,r},b_{i+1,j,r}) \}_{r=1}^m$ where
$b_{i,j,r},b_{i+1,j,r}\in S\cup\{\$\}$ and we insert the padding symbol $\$$ in both coordinates if the path has length less than $m$.
Then if $x_{i,j}$ denotes the concatenation
$\{a_{i,j}\}_{j=1}^l\{b_{i,j,r}\}_{r=1}^m$, and $x_{i+1,j}$ denotes
$\{a_{i+1,l}\}_{l=1}^j\{b_{i+1,j,r}\}_{r=1}^m$, then $\psi(x_{i,j})s_i = \psi(x_{i+1,j})$, and both these points, as well as $\pi(x_{i,j})$ and $\pi(x_{i+1,j})$ are depicted in Figure~\ref{fig:filling2}.

 \begin{figure}[h!]
\begin{tikzpicture}[scale=1.2]

 \begin{scope}[decoration={markings,mark = at position 0.5 with {\arrow[scale=2]{latex}}}]

   \draw
    (0,3) circle (3 and 3);

  \draw[postaction={decorate}]
    (0,0) arc (320:360:-4 and 6.0);
  \draw[postaction={decorate}]
    (0,0) arc (-90: 10:0.8 and 3.5);

       \draw[postaction={decorate}]  (0,6) node[above] {$s_i$};

    \draw[postaction={decorate}]
      (-.95,3.9)  .. controls (-.8,5)  .. (-.4,5.95) ;
    \draw  (-1,5) node {$\gamma_i$};
          \draw[postaction={decorate}]
        (.805,4.15)  .. controls (.7,5) .. (.4,5.95);
    \draw  (1,5) node[right=-9pt] {$\gamma_{i+1}$};

       \fill[radius=1.5pt,red]
    (-.4,3.3) circle node[ left=.5pt] {\footnotesize$\pi(x_{i,j})$};
       \fill[radius=1.5pt,red]
    (.4,3.4) circle node[ right=.5pt] {\footnotesize$\pi(x_{i+1,j})$};

       \fill[radius=1.5pt,red]
    (-.2,4.3) circle node[ above left=-.5pt and -.5pt] {\footnotesize$\psi(x_{i,j})$};
       \fill[radius=1.5pt,red]
    (.1,4.36) circle node[ above right=-.5pt and -.5pt] {\footnotesize$\psi(x_{i+1,j})$};

        \draw[red]
   (-.4,3.3) .. controls (-.3,3.7)  ..  (-.2,4.3) ;
          \draw[red]
         (.4,3.4) .. controls (.2,3.8) ..     (.1,4.36);

             \draw[red]  (-.25,3) node {\footnotesize$\beta_i$};
     \draw[red]  (.35,3) node {\footnotesize$\beta_{i+1}$};

                \draw[red] (-.2,4.3) parabola (.1,4.36);

             \draw[red,postaction={decorate}]     (-.84,2.6) parabola   (-.4,3.3);
                  \draw[red,postaction={decorate}]   (.8,2.7) parabola       (.4,3.4);

                         \fill[radius=1.5pt,blue]
    (-.84,2.6) circle node[ left=1pt] {$p_{i,j}$};
       \fill[radius=1.5pt,blue]
    (.8,2.7) circle node[right=1pt] {$p_{i+1,j}$};

   \fill[radius=1.5pt,blue]
    (-.95,3.9) circle node[left=1pt] {$\pi(u_i)$};
       \fill[radius=1.5pt,blue]
    (.805,4.15) circle node[right=1pt] {$\pi(u_{i+1})$};

   \fill[radius=1.5pt,blue]
    (-.4,5.95) circle node[above left=1pt] {$g_i$};
       \fill[radius=1.5pt,blue]
    (.4,5.95) circle node[above right=1pt] {$g_{i+1}$};

   \fill[radius=1.0pt,blue]
    (-.4,5.95) circle node[above left=1pt] {$g_i$};
       \fill[radius=1.0pt,blue]
    (.4,5.95) circle node[above right=1pt] {$g_{i+1}$};

     \end{scope}
\end{tikzpicture}
\caption{Depiction of a path in $\Gamma(G,S)$ between $p_{i,j} = \pi(a_{i,1}a_{i,2}\dots a_{i,j})$ and $p_{i+1,j} = \pi(a_{i+1,1}a_{i+1,2}\dots a_{i+1,j})$, where $u_i=a_{i,1}a_{i,2}\dots a_{i,|u_i|_S}$ is such that $\psi(u_i) = g_i$.}\label{fig:filling2}
\end{figure}
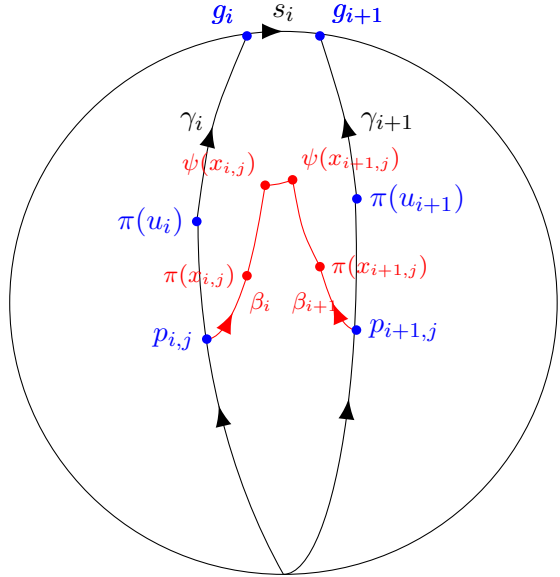

Thus there is a path in $\Gamma(G,S)$ from $p_{ij}$ to $p_{i+1,j}$ of length at most $2m+2h_{S,\psi}(mn/2+e+m)+1$
consisting of the following segments, as shown in Figure~\ref{fig:filling2}:
\begin{itemize}[itemsep=5pt]
\item $\beta_i=\{b_{i,j,r}\}_{i=1}^r$ from $p_{i,j}$ to $\pi(x_{i,j})$ of length at most $m$,
\item a path from $\pi(x_{i,j})$ to $\psi(x_{i,j})$ of length at most $h_{S,\psi}(mn/2+e+m)$,
\item an edge labeled $s_i$ from $\psi(x_{i,j})$ to $\psi(x_{i+1,j})$,
\item a path from $\psi(x_{i,j})s_i = \psi(x_{i+1,j})$ to $\pi(x_{i+1,j})$ of length at most $h_{S,\psi}(mn/2+e+m)$,
\item $\beta_{i+1}=\{b_{i+1,j,r}\}_{i=1}^r$ from $\pi(x_{i+1,j})$ to $p_{i+1,j}$ of length at most $m$.
\end{itemize}

In Figure~\ref{fig:filling3} the paths between $p_{i,j}$ and $p_{i+i,j}$ for all $1 \leq j < \max\{|u_i|_S,|u_{i+1}|_S\}$ are  depicted for one corridor.
Between $p_{|u_i|}$ and $p_{|u_{i+1}|}$ we use that existing path
$\gamma_i s_i \gamma_{i+1}^{-1}$.

These corridors create two types of cells.  The first
type are created from two of these paths and their connecting edges, for some $j$ and $j+1 < \max\{|u_i|_S,|u_{i+1}|_S\}$.
This creates a cell with perimeter at most
\[2(2m+2h_{S,\psi}(mn/2+e+m)+1) + 2 = 4m+4h_{S,\psi}(mn/2+e+m)+4,\] where the additional $+2$ accounts for the single edges $a_{i,j+1}$   between $p_{i,j}$ and $p_{i,j+1}$, and   $a_{i+1,j+1}$  between $p_{i+1,j}$ and $p_{+1,j+1}$, which lie on the paths $u_i$ and $u_{i+1}$, respectively, and are not part of the paths previously constructed.

\begin{figure}[h!]
\begin{tikzpicture}[scale=1.2]

 \begin{scope}[decoration={markings,mark = at position 0.5 with {\arrow[scale=2]{latex}}}]

   \draw
    (0,3) circle (3 and 3); 

  \draw
    (0,0) arc (320:360:-4 and 6.0);
  \draw
    (0,0) arc (-90: 10:0.8 and 3.5);

       \draw[postaction={decorate}] (0,6) node[above] {$s_i$};

    \draw[postaction={decorate}]
      (-.95,3.9)  .. controls (-.8,5)  .. (-.4,5.95) ;
    \draw  (-1,5) node {\footnotesize$\gamma_i$};
          \draw[postaction={decorate}]
        (.805,4.15)  .. controls (.7,5) .. (.4,5.95);
    \draw  (1,5) node[right=-9pt] {\footnotesize$\gamma_{i+1}$};

       \fill[radius=1.0pt,red]
    (-.4,3.3) circle ;
       \fill[radius=1.0pt,red]
    (.4,3.4) circle ;

       \fill[radius=1.0pt,red]
    (-.2,4.3) circle; 
       \fill[radius=1.0pt,red]
    (.1,4.36) circle ; 

        \draw[red]
   (-.4,3.3) .. controls (-.3,3.7)  ..  (-.2,4.3) ;
          \draw[red]
         (.4,3.4) .. controls (.2,3.8) ..     (.1,4.36);

                \draw[red] (-.2,4.3) parabola (.1,4.36);
             \draw[red,postaction={decorate}]     (-.84,2.6) parabola   (-.4,3.3);
                  \draw[red,postaction={decorate}]   (.8,2.7) parabola       (.4,3.4);

%
%


        \draw[red]
   (-.5,3.7) .. controls (-.4,4.1)  ..  (-.2,4.7) ;
          \draw[red]
         (.4,3.8) .. controls (.2,4.3) ..     (.1,4.76);

                \draw[red] (-.2,4.7) parabola (.1,4.76);
             \draw[red]    (-.9,3) parabola   (-.5,3.7);
                  \draw[red]   (.8,3.1) parabola       (.4,3.8);


        \draw[red]
(-.6,4.2) .. controls (-.4,4.7)  ..     (-.2,5.1) ;
          \draw[red]
         (.5,4.3) .. controls (.2,4.8) ..      (.1,5.15);

                \draw[red] (-.2,5.1) parabola (.1,5.15); 
             \draw[red]    (-.92,3.45) parabola   (-.6,4.2) ; 
                  \draw[red]   (.8,3.6) parabola     (.5,4.3) ; 

        \draw[red]
    (-.4,2.7) .. controls (-.4,2.7)  ..     (-.18,3.6) ;
          \draw[red]
           (.4,2.7).. controls (.2,3) ..         (.1,3.65);

                \draw[red]  (-.18,3.6) parabola     (.1,3.65); 
             \draw[red]      (-.78,2.2)  parabola    (-.4,2.7) ; 
                  \draw[red]      (.74,2.3) parabola      (.4,2.7) ; 


        \draw[red]
  (-.34,2.2) .. controls (-.3,2.4)  ..     (-.18,3) ;
          \draw[red]
         (.3,2.3) .. controls (.2,2.5) ..         (.1,3.05);

                \draw[red]  (-.18,3) parabola     (.1,3.05); 
             \draw[red]      (-.7,1.8)  parabola    (-.34,2.2); 
                  \draw[red]       (.7,1.9) parabola          (.3,2.3)  ; 

        \draw[red]
    (-.3,1.8)  .. controls (-.2,2.1)  ..        (-.18,2.4) ;
          \draw[red]
           (.25,1.9)  .. controls (.18,2.1) ..           (.1,2.45);

                \draw[red]     (-.18,2.4) parabola         (.1,2.45); 
             \draw[red]     (-.56,1.3)  parabola        (-.3,1.8) ; 
                  \draw[red]        (.65,1.4) parabola             (.25,1.9)  ; 

        \draw[red]
 (-.2,1.35) .. controls (-.15,1.5)  ..           (-.14,1.95)  ;
          \draw[red]
    (.2,1.4)   .. controls (.15,1.6) ..              (.07,2);

                \draw[red]     (-.14,1.95)  parabola      (.07,2); 
             \draw[red]       (-.4,.85)  parabola       (-.2,1.35); 
                  \draw[red]         (.54,.9) parabola                (.2,1.4)  ; 

           \fill[radius=1.0pt,blue]
    (-.9,3) circle;
       \fill[radius=1.0pt,blue]
    (.8,3.1) circle ;

       \fill[radius=1.0pt,red]
    (-.5,3.7) circle ;
       \fill[radius=1.0pt,red]
    (.4,3.8) circle ;

       \fill[radius=1.0pt,red]
    (-.2,4.7) circle ;
       \fill[radius=1.0pt,red]
    (.1,4.76) circle ;

           \fill[radius=1.0pt,blue]
    (-.92,3.45) circle; 
       \fill[radius=1.0pt,blue]
    (.8,3.6) circle ; 

       \fill[radius=1.0pt,red]
    (-.6,4.2) circle ; 
       \fill[radius=1.0pt,red]
    (.5,4.3) circle ; 

       \fill[radius=1.0pt,red]
    (-.2,5.1) circle ; 
       \fill[radius=1.0pt,red]
    (.1,5.15) circle ; 

           \fill[radius=1.0pt,blue]
    (-.78,2.2) circle; 
       \fill[radius=1.0pt,blue]
    (.74,2.3) circle ; 

       \fill[radius=1.0pt,red]
    (-.4,2.7) circle ; 
       \fill[radius=1.0pt,red]
    (.4,2.7) circle ; 

       \fill[radius=1.0pt,red]
    (-.18,3.6) circle ; 
       \fill[radius=1.0pt,red]
    (.1,3.65) circle ; 

           \fill[radius=1.0pt,blue]
    (-.7,1.8) circle; 
       \fill[radius=1.0pt,blue]
    (.7,1.9) circle ; 

       \fill[radius=1.0pt,red]
    (-.34,2.2) circle ; 
       \fill[radius=1.0pt,red]
    (.3,2.3) circle ; 

       \fill[radius=1.0pt,red]
    (-.18,3) circle ; 
       \fill[radius=1.0pt,red]
    (.1,3.05) circle ; 

           \fill[radius=1.0pt,blue]
    (-.4,.85) circle; 
       \fill[radius=1.0pt,blue]
    (.54,.9) circle ; 

       \fill[radius=1.0pt,red]
    (-.2,1.35) circle ; 
       \fill[radius=1.0pt,red]
    (.2,1.4) circle ; 

       \fill[radius=1.0pt,red]
    (-.14,1.95) circle ; 
       \fill[radius=1.0pt,red]
    (.07,2) circle ; 

           \fill[radius=1.0pt,blue]
    (-.56,1.3) circle; 
       \fill[radius=1.0pt,blue]
    (.65,1.4) circle ; 

       \fill[radius=1.0pt,red]
    (-.3,1.8) circle ; 
       \fill[radius=1.0pt,red]
    (.25,1.9) circle ; 

       \fill[radius=1.0pt,red]
    (-.18,2.4) circle ; 
       \fill[radius=1.0pt,red]
    (.1,2.45) circle ; 

   \fill[radius=1.0pt,blue]
    (-.4,5.95) circle node[above left=1pt] {\footnotesize$g_i$};
       \fill[radius=1.0pt,blue]
    (.4,5.95) circle node[above right=1pt] {\footnotesize$g_{i+1}$};

       \fill[radius=1.0pt,blue]
    (-.84,2.6) circle node[ left=1pt] {\footnotesize$p_{i,j}$};
       \fill[radius=1.0pt,blue]
    (.8,2.7) circle node[right=1pt] {\footnotesize$p_{i+1,j}$};

   \fill[radius=1.0pt,blue]
    (-.95,3.9) circle node[left=1pt] {\footnotesize$\pi(u_i)$};
       \fill[radius=1.0pt,blue]
    (.805,4.15) circle node[right=1pt] {\footnotesize$\pi(u_{i+1})$};

   \fill[radius=1.0pt,blue]
    (-.4,5.95) circle node[above left=1pt] {\footnotesize$g_i$};
       \fill[radius=1.0pt,blue]
    (.4,5.95) circle node[above right=1pt] {\footnotesize$g_{i+1}$};

     \end{scope}
\end{tikzpicture}
\caption{Filling the corridors created by the paths $u_i\gamma_i$ with cells of bounded perimeter.}\label{fig:filling3}
\end{figure}
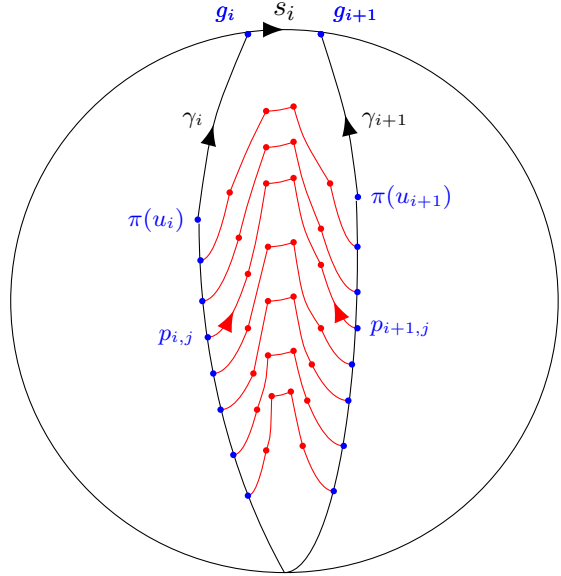

The second type is the ``top" cell created by the path from $p_{i,|u_i|-1}$ to $p_{i+1,|u_{i+1}|-1}$ together with the path
$a_{i,|u_i|-1}\gamma_is_i\gamma_{i+1}^{-1}a_{i+1,|u_{i+1}|-1}$.
This cell has perimeter at most \[\begin{array}{lll}
&
(2m+2h_{S,\psi}(mn/2+e+m)+1)+2+(  2h_{S,\psi}(mn/2+e)+1)\\
=& 2m+2h_{S,\psi}(mn/2+e+m)+2h_{S,\psi}(mn/2+e)+4\\
\leq & 4m +4h_{S,\psi}(mn/2+e+m)+4\end{array}\]
where the terms in the first line come, respectively, from
\begin{itemize}[itemsep=5pt]
    \item the path from $p_{i,|u_i|-1}$ to $p_{i+1,|u_{i+1}|-1}$,
    \item the two edges labeled $a_{i,|u_i|-1}$ and $a_{i+1,|u_{i+1}-1}$, and
    \item the path $\gamma_is_i\gamma_{i+1}^{-1}$.
\end{itemize}
To obtain the inequality, note that $h_{S,\psi}\in \F$ and $m\geq 1$, so $2m+4\leq 4m+4$ and \[2h_{S,\psi}(mn/2+e+m)+2h_{S,\psi}(mn/2+e)\leq 4h_{S,\psi}(mn/2+e+m).\]
Setting $c=m/2, d=e+m$ and $\cC=4m+4$ proves the first claim in the proposition.

To prove the second claim in the proposition, we count the total number of cells required to subdivide the initial loop into cells of bounded perimeter.
This will yield the inequality involving the Dehn function $\delta$.

It follows from Lemma~\ref{lem:bound1} that $|u_i|\leq mn/2+e$
for all $1\leq i\leq n$.
Each corridor is filled by at most $(mn/2+e)$ cells, each of perimeter at most $4h_{S,\psi}(cn+d)+\cC$, where $c,d$ and $\cC$ depend on $m$ and $e$.
For  a fixed finite  presentation $\langle S\mid R\rangle$ for $G$ with Dehn function $\delta$, each cell constructed above can be filled by at most  $\delta(4h_{S,\psi}(cn+d)+\cC)$ cells with perimeter labeled by a relator from the set $R$.

With $n$ corridors, there are $n\cdot (mn/2+e)=n(cn+e)$ such cells to fill.
Thus an upper bound on the number of relators required to fill $w$ is
\begin{dmath*}
n(cn+e)\cdot \delta\left(4h_{S,\psi}(cn+d)+\cC\right)
=n^2(c+\frac{e}{n})\cdot \delta\left(4h_{S,\psi}(cn+d)+\cC\right)\\
\leq n^2(c+e)\cdot \delta\left(4h_{S,\psi}(cn+d)+\cC\right).
\end{dmath*}
Setting $D=c+e$
and noting that it follows from Lemma~\ref{lemma:lose_the_constant} that $4h_{s,\psi}(cn+d)+\cC\approxF h_{s,\psi}(n)$
proves the second claim of the proposition.
\end{proof}

\section{Separating finitely presented Cayley automatic groups from automatic groups}\label{sec:FP}

In this section we prove Theorem~\ref{thmA:fp}.
First
we  introduce the following  notion.

\begin{definition}
	Let  $f,g\in \F$.
	We say that
	$f \ll g $
	if there exists an unbounded
	function $t \in \F$ such that $ft \preceqF g$. 	
\end{definition}

\begin{example}
If $g(n)=n^c$ with $c>2$ and $f(n)=n^2$ then $f\ll g$.
Take $t(n)=n^{c-2}$.  Then $t \in \F$ is an unbounded function and \[f(n)t(n) = n^c \preceqF g(n).\]
\end{example}

Next we define the following.

\begin{definition}\label{defn:strong-super}
     	A function $f\in \F$ is
{\em  super-quadratic} if for all constants
 	$M > 0$ we have $f(n) \leqslant M n^2$
 	for at most finitely many $n \in \mathbb{N}$.
    A non-zero function is $f\in \F$ is {\em  strongly-super-polynomial} if $n^2f\ll f$.
\end{definition}

\begin{example}
The functions $n^2\ln n$ and  $n^c$ for $c>2$ are super-quadratic;  the functions $e^n$ and  $n^{\ln n}$ are strongly-super-polynomial.
\end{example}

\Olshan\ introduces the notion of a function being {\em almost quadratic} in \cite{Ol-almost}; our definition of a super-quadratic function is the same as being not almost quadratic.
However, our notion of a strongly-super-polynomial is
stronger than the more standard definition of a super-polynomial function given, for example, in \cite{GrigPak}:

\begin{definition}\label{defn:superpoly}
   A function $f\colon \N\to \R$
   is
   {\em super-polynomial} if $$ \lim_{n \to \infty} \frac{\ln f(n)}{\ln n} = \infty.$$
\end{definition}

In Lemma~\ref{lem:NotStrongSuperP}
we give an example of a function in $\F$ which satisfies the above limit but is not strongly-super-polynomial. However  Proposition~\ref{prop:strong_implies_super} justifies our use of ``strongly" since it shows that every strongly-super-polynomial function is super-polynomial.

Proposition~\ref{prop:strongSuper} shows that $f$ is strongly-super-polynomial if and only if $n^cf\ll f$ for any $c>0$, that is, there is nothing special about the choice of the exponent $2$ in Definition~\ref{defn:strong-super}.

\begin{lemma}\label{lem:equiv-super-quad}
	A function $f \in \F$
    is super-quadratic if
    and only if
    $n^2 \ll f$.
\end{lemma}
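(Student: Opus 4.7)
The plan is to treat the two directions of the biconditional as essentially unpacking and repackaging the definition, with the key bridge being the observation that super-quadraticity of $f \in \F$ is equivalent to $f(n)/n^2 \to \infty$.

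First, for the reverse direction, I would assume $n^2 \ll f$, so there is an unbounded $t \in \F$ with $n^2 t(n) \preceqF f(n)$. Unpacking Definition~\ref{defn:equivF} produces constants $K, M, N_0$ such that $n^2 t(n) \leq K f(Mn)$ for all $n \geq N_0$. Fix an arbitrary $C > 0$. Since $t$ is unbounded and non-decreasing, there is $N_1$ with $t(n) \geq C K M^2$ for $n \geq N_1$. For $n \geq \max(N_0, N_1)$ this gives $f(Mn) \geq C M^2 n^2 = C (Mn)^2$, and then the non-decreasing nature of $f$ lets me interpolate: for any integer $m \geq M \max(N_0,N_1)$, writing $n = \lfloor m/M \rfloor$, we have $f(m) \geq f(Mn) \geq C(Mn)^2 \geq C m^2/4$ (since $Mn \geq m/2$ for $n \geq 1$). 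Since $C$ was arbitrary, $f(m)/m^2 \to \infty$, so $f(m) \leq M' m^2$ can hold only finitely often for any fixed $M'$, which is exactly super-quadraticity.

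For the forward direction, assume $f$ is super-quadratic. Restating the definition: for every $M > 0$ there is $N_M$ with $f(n) > M n^2$ for all $n \geq N_M$, i.e., $\lim_{n\to\infty} f(n)/n^2 = +\infty$. I will define
\[
t(n) = \inf_{k \geq n} \frac{f(k)}{k^2}.
\]
This is manifestly non-decreasing in $n$ (the infimum is over a shrinking set) and non-negative, so $t \in \F$ on a suitable domain $[N,\infty)$ where $n \geq 1$. Because $f(k)/k^2 \to \infty$, for every $M > 0$ there is $N_M$ with $f(k)/k^2 \geq M$ for all $k \geq N_M$, hence $t(n) \geq M$ for $n \geq N_M$; therefore $t$ is unbounded. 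Finally, $t(n) \leq f(n)/n^2$ by definition, so $n^2 t(n) \leq f(n)$ for all $n$ in the domain, which immediately yields $n^2 t(n) \preceqF f(n)$ (taking the trivial constants $K=M=1$). This witnesses $n^2 \ll f$.

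The only mildly delicate step is the interpolation in the first direction: $\preceqF$ allows a multiplicative rescaling of the input, so the inequality $n^2 t(n) \leq K f(Mn)$ only controls $f$ at points of the form $Mn$, and one has to use monotonicity of $f$ to push the lower bound down to all sufficiently large integers while losing only a constant factor. I do not anticipate any real obstacle here; the second direction is essentially a clean definition chase once the right $t$ is guessed, and the first direction is a straightforward application of the unboundedness of $t$ combined with monotonicity.
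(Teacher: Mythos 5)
Your proof is correct and takes essentially the same approach as the paper: the same witness idea in the forward direction (your $t(n)=\inf_{k\ge n}f(k)/k^2$ is just a real-valued version of the paper's integer staircase built from the thresholds $m_i$), and the same monotonicity-plus-division-with-remainder maneuver in the reverse direction, which the paper phrases as a contradiction rather than a direct limit computation. No gaps.
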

\begin{proof}
    Assume first that $n^2 \ll f$.
    Since $n^2 \ll f$, there exist an unbounded
    function $t \in \mathcal{F}$ and integer constants
    $K,N>0$ and $M \geqslant 1 $ such that
    $n^2 t(n) \leqslant  K f (M n)$ for all
    $n \geqslant N$.
    Assume that for some $M'\geqslant 1$ there exist
    infinitely many $n_i \in \mathbb{N}$, $i\geq 1$ with $1\leq n_i < n_{i+1}$ for which
    $f(n_i) \leqslant M' n_i^2$.
    Let $n_i = k_i M + r_i$, where $k_i$ is an integer
    and $0 \leqslant r_i < M$. Then we have:
    \begin{equation*}
    \begin{split}
       \frac1{K}  k_i ^2  t(k_i)\leqslant f (M k_i)
        \leqslant f (n_i)
        \leqslant M' n_i^2 = M' M^2 k_i^2 +
        2 M' M r_i k_i + M' r_i ^2 \leqslant \\
        M' M^2 k_i ^2 + 2 M' M^2 k_i + M' M^2
        \leqslant M' M^2 (k_i +1)^2
    \end{split}
    \end{equation*}
    for all $k_i \geqslant N$.
    Therefore, $t(k_i) \leqslant KM'M^2
    \frac{(k_i + 1)^2}{k_i^2} \leqslant 2 KM'M^2$ for all
    $k_i \geqslant \max\{N, 3\}$, where the $3$ follows from
    the simple observation that if
    $k \geqslant 3$ then $\frac{(k+1)^2}{k^2} \leqslant 2$.
    This contradicts the fact that $t$ is an unbounded function.

    Now assume that $f$ is super-quadratic. Then
    for each integer $i \geqslant 1$ the set
    \[ \{m \, |\,\forall n
    \left[n \geqslant m \implies f(n) \geqslant i n^2
    \right]\}\] is non-empty. Let $m_i=\min  \{m \, |\,\forall n
    \left[n \geqslant m \implies f(n) \geqslant i n^2
    \right]\}$.
    We define a function $t(n)$ as follows:
     for $0 \leqslant n < m_1$, let $t(n)=0$, and for
 $m_i \leq n < m_{i+1}$ with $i\geq 1$,  let $t(n) = i$.
    By construction, $t(n)$ is a nondecreasing and unbounded function.
    As $n^2 t(n) \leqslant f(n)$, it follows
    that $n^2 \ll f$.
 \end{proof}

 In \cite{WeirdQuadDehn}  \Olshan\ gives an example of a finitely presented group  which has Dehn function bounded above  by $c_1n^2$ for infinitely many values of $n$,  bounded below by $c_2n^2\log' n\ \log'\log' n$ for infinitely many values of $n$, where $\log'(n)=\max\{\log_2 n, 1\}$, and bounded between $c_3n^2$ and $c_4n^2\log'n\ \log'\log' n$ for all $n\in \N$. Since this Dehn function is not super-quadratic, it follows that  Theorem~\ref{thmA:fp} below does not apply to this example.

We now  prove Theorem~\ref{thmA:fp}.

\ThmA*
\begin{proof}
Fix a presentation for $G$ and let $\delta$ be the Dehn function arising from this presentation. Fix a Cayley automatic structure $(S,S,L,\psi)$ for $G$.
If  $n^2 \ll \delta$,  there is an unbounded function $t(n) \in   \F$ and positive constants $K,M,N_0 \in \N$ so that $n^2t(n) \leq K\delta(Mn)$ for all $n \geq N_0$.

From Proposition~\ref{prop:dehnbound} we know that there are constants $N_1, D\geq 1$ and a function $f\in \F$ so that  for $n \geq N_1$, we have  $\delta(n) \leq Dn^2\delta(f(n))$ where $f\approxF h_{S,\psi}(n)$.

Combining these equations we have  that for all $n\geq \max\{N_0,N_1\}$
\[n^2t(n)\leq K\delta(Mn)\leq KDM^2n^2\delta\left(f(Mn)\right)\]
and dividing both sides by $KDM^2n^2$ we obtain
\begin{equation}\label{egnINEQ}
\frac{t(n)}{KDM^2}\leq  \delta\left(f(Mn)\right).\end{equation}

Define
$$\phi(n) = \min\left\{m\, \middle|\,\frac{t(n)}{KDM^2} \leq \delta(m)\right\}.$$

It is immediate from Equation~(\ref{egnINEQ}) that in the definition of $\phi(n)$ we have $\phi(n) \leq f(Mn)$ for all $n\geq \max\{N_0,N_1\}$, and hence $\phi \preceqF f$.  Since $t \in   \F$ is unbounded, it follows that $\phi \in \F$ and $\phi$ is unbounded.

Now assume that the inequality $n^2 \delta \ll \delta$
is satisfied.
Therefore, there exist integer constants $K,M,N_0 >0$ and an unbounded function $t \in \mathcal{F}$ such that
    \begin{equation}
    \label{ineq_n^2_delta_ll_delta}
      n^2 \delta (n) t(n) \leqslant K \delta (M n)
    \end{equation}
    for all $n \geqslant N_0$.
    It follows from statement (2) of Proposition 3.2. that there exists
    a function $f \approx_1 h_{S,\psi}$ and integer constants
    $N_1 \geqslant 0$ and $D>0$ for which
    the inequality
    \begin{equation*}
     \delta(n) \leqslant D n^2 \delta (f(n))
    \end{equation*}
    holds for all $n \geqslant N_1$.
    This implies that
    $\delta (Mn) \leqslant DM^2 n^2 \delta (f (Mn))$ for all
    $n \geqslant N_1$.
    Combining this with the inequality in \eqref{ineq_n^2_delta_ll_delta} we obtain
    that
    $$n^2 \delta (n) t(n) \leqslant K \delta (M n)
     \leqslant D K M^2 n^2 \delta (f(Mn))
    $$
    for all $n \geqslant  \max \{N_0, N_1\}$.
    Therefore,
    \begin{equation*}
       \delta (n) \tau (n) \leqslant  \delta (f(Mn))
    \end{equation*}
    for all $n \geqslant  \max \{N_0, N_1\}$, where
    $\tau (n) =  \frac{t(n)}{D K M^2}$.
    Let \[m_0 = \min\{n\in \dom(\tau)\subseteq  \N \, | \, \tau(n) \geqslant 2 \};\]
    such $m_0$ exists because $\tau(n)$ is unbounded.
    Therefore,
    \begin{equation*}
    \label{semifinal_ineq}
       2 \delta (n) \leqslant \delta (f(Mn)))
    \end{equation*}
    for all $n \geqslant \max \{N_0, N_1, m_0\}$.
    Let  $d_0 = \min\{n \, | \, \delta (n)
    	\geqslant 1\}$.
    	
    	If $f(Mn)<n$ for some
    $n \geqslant \max\{N_0, N_1, m_0, d_0\}$
    then $2\delta(n)\leq \delta(f(Mn))\leq \delta(n)$, which is a contradiction.
    Thus
for all $n\geq \max \{N_0, N_1, m_0, d_0\}$ we must have that
    \begin{equation*}
    \label{final_ineqAA}
        n \leqslant f(Mn).
    \end{equation*}
   From this we obtain that
   $\ii \preceqF f$. As $f \preceqF h_{S,\psi}$ it follows that
   $\ii\preceqF h_{S,\psi}$, and we conclude that $G$ is $\ii$-separated.
\end{proof}

\section{Dense groups}\label{sec:dense}

We introduce a property of some infinitely presented groups which will allow us to obtain sharper lower bounds on the Cayley distance function of such a Cayley automatic group.
This property will be shown to be independent of generating set and the prototypical examples of groups with this property are restricted wreath products.

Recall that  $F_X$ denotes the free group generated by a set $X$.

\begin{definition}[Densely generated]
\label{def:dense}
Let $G$ be a group with finite  generating set  $X$.
We say that $G$ is   {\em densely generated} by $X$
if  there exist  constants $E,F, N_0\in \N$,
 $1\leq E<F$
such that  for all $n\geq N_0$ there is a word $w_n\in (X\cup X^{-1})^*$ which has the following properties:
 \begin{itemize}[itemsep=5pt]
 \item $w_n=_G1_G$,
 \item $En\leq |w_n|\leq Fn$, and
 \item for any collection of words $u_i,\rho_i\in (X\cup X^{-1})^*$,   $1 \leq j \leq k$ with $u_i=_G1_G$ and
 \[ w_n=_{F_X}\prod_{i=1}^k\rho_iu_i\rho_i^{-1}, \]
 we have $|u_j|>n$ for some  $1 \leq j \leq k$.
 \end{itemize} \end{definition}
In other words, for every interval $[En,Fn]$ there is a loop $w_n$ whose length lies in that interval which  cannot be {filled} by loops all having length at most $n$.
It follows that if $G$ is densely generated by $X$ then every presentation for $G$ over $X$ is infinite.

The following lemma shows that being densely generated is independent of the choice of finite generating set.

\begin{lemma}\label{lem:denseGsetInvariant} If $G$ is {densely generated} by $X$ and $Y$ is another finite generating set for $G$ then $G$ is {densely generated} by $Y$.\end{lemma}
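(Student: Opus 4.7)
The plan is to transfer the dense loops $w_m$ from $(X \cup X^{-1})^*$ into loops in $(Y \cup Y^{-1})^*$ via a word-substitution homomorphism that behaves like a quasi-isometry on the free groups. First I would fix free-group homomorphisms $\rho\colon F_X \to F_Y$ and $\sigma\colon F_Y \to F_X$ determined on generators by $\rho(x) = \alpha_x$ and $\sigma(y) = \beta_y$, where $\alpha_x \in (Y \cup Y^{-1})^*$ and $\beta_y \in (X \cup X^{-1})^*$ are chosen words with $\pi(\alpha_x) = x$ and $\pi(\beta_y) = y$. Set $M_1 = \max_{y \in Y} |\beta_y|$ and $M_2 = \max_{x \in X} |\alpha_x|$; after discarding any $x \in X$ with $\pi(x) = 1_G$ (which does not affect dense generation) we may assume $|\alpha_x| \geq 1$. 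Since $\sigma \circ \rho(x) =_G x$ but not in $F_X$ in general, each $\tau_x := \sigma(\rho(x)) \cdot x^{-1}$ is a loop in $F_X$ of length at most $M_1 M_2 + 1$.

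Given constants $E, F, N_0$ witnessing dense generation by $X$, my candidate loops are $w_n' := \rho(w_m)$ where $m := \lceil M_1 n \rceil$. Fix $N_0'$ large enough that $m \geq N_0$ and $M_1 M_2 + 1 \leq m$ whenever $n \geq N_0'$. Since $|w_m| \leq |\rho(w_m)| \leq M_2 |w_m|$ and $Em \leq |w_m| \leq Fm$, we obtain $EM_1 n \leq |w_n'| \leq M_2 F (M_1 + 1) n$, and clearly $w_n' =_G 1_G$. Setting $E' := EM_1$ and $F' := M_2 F(M_1 + 1)$ gives $E' < F'$, establishing the length condition of the definition.

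The main step, and principal obstacle, is verifying the filling condition: choosing $m$ precisely enough that any hypothetical $n$-short filling of $w_n'$ in $Y$ translates back to an $m$-short filling of $w_m$ in $X$. Suppose for contradiction that $w_n' = \prod_j \rho_j' u_j' (\rho_j')^{-1}$ in $F_Y$ with each $u_j' =_G 1_G$ and $|u_j'| \leq n$. Applying $\sigma$ yields $\sigma(\rho(w_m)) = \prod_j \sigma(\rho_j') \sigma(u_j') \sigma(\rho_j')^{-1}$ in $F_X$, a product of conjugates of the loops $\sigma(u_j')$, each of length at most $M_1 n \leq m$. On the other hand, writing $w_m = x_1 \cdots x_L$ and telescoping via $\sigma(\rho(x_i)) = \tau_{x_i} x_i$ gives
\[
\sigma(\rho(w_m)) = \Bigl(\prod_{i=1}^L (x_1 \cdots x_{i-1}) \tau_{x_i} (x_1 \cdots x_{i-1})^{-1}\Bigr) \cdot w_m
\]
in $F_X$. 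Rearranging expresses $w_m$ as a product of conjugates of the $\sigma(u_j')$ and of the $\tau_{x_i}^{-1}$, all of length at most $m$, contradicting dense generation of $G$ by $X$. Thus no such filling of $w_n'$ exists, completing the argument.
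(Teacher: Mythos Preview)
Your proof is correct and follows the same strategy as the paper: transport loops between generating sets by word substitution, and absorb the discrepancy between $\sigma(\rho(w_m))$ and $w_m$ via the short correction loops $\tau_{x_i}$ (which are exactly the paper's ``boundary cells'' $\kappa(\tau(x_i))x_i^{-1}$). Your presentation is slightly more direct---you construct the $Y$-loops explicitly and use a clean free-group telescoping identity, whereas the paper argues by contradiction and phrases the same step in van Kampen diagram language---but the underlying mechanism is identical.
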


\begin{proof}
Let $| \cdot |_X$ denote the length of a word in $(X \cup X^{-1})^*$ and $| \cdot |_Y$ denote the length of a word in $(Y \cup Y^{-1})^*$.

For each $x\in X$ choose a nonempty word $v_x\in (Y\cup Y^{-1})^*$ with $x=_Gv_x$. Let $M_1=\max_{x\in X}\{|v_x|_Y\}$, and $\tau\colon (X\cup X^{-1 })^*\to (Y\cup Y^{-1})^*$ be  the monoid homomorphism defined by $\tau(x)=v_x$.
For each $y\in Y$ choose a nonempty word $q_y\in (X\cup X^{-1})^*$ with $y=_Gq_y$. Let $M_2=\max_{y\in Y}\{|q_y|_X\}$, and $\kappa\colon (Y\cup Y^{-1 })^*\to (X\cup X^{-1})^*$ the monoid homomorphism defined by $\kappa(y)=q_y$.

As $G$ is densely generated by  $X$, there exist fixed constants $E,F,N_0 \in \N$ as in Definition~\ref{def:dense}.
Suppose $G$ is not densely generated by $Y$. Then for all constants $E',F', N_0' \in \N$ there exist some $s\geq N_0'$
so that all words equal to $1_G$ of length between $E's$ and $F's$ can be filled by cells of perimeter at most $s$.

Choose $E'=EM_2$ and $F'=M_1M_2F$, and $N_0'=\max\{N_0,M_1M_2+1\}$.
Let $s_0\geq N_0'$ be chosen so that with respect to these constants, all words equal to $1_G$ of length between $E's_0$ and $F's_0$ can be filled by cells of perimeter at most $s_0$.

As $G$ is densely generated by $X$, choose $n=M_2s_0$. There must be a word $w_n\in (X\cup X^{-1})^*$ so that $w_n =_G 1_G$ and whose length satisfies
\[E(M_2s_0)\leq |w_n|\leq F(M_2s_0)\]
which cannot be filled by cells all of perimeter at most $n$.
Then $\tau(w_n)$ labels a path in $\Gamma(G,Y)$ so that
\[EM_2s_0\leq |\tau(w_n)|\leq M_1FM_2s_0.\]
Note that the map $\tau$ does not decrease length, as $\tau(w_n)$ is obtained by substitution, with no free reduction.

Since $E's_0\leq |\tau(w_n)|\leq F's_0$, by our choice of $s_0$ we can fill this word
by cells of perimeter at most $s_0$. Now consider this van Kampen diagram as a subgraph of $\Gamma(G,Y)$.
Map the entire subgraph, edge-by-edge, into $\Gamma(G,X)$ by applying the map $\kappa$; the boundary of the new subgraph consists of paths of the form $\kappa(\tau(x_i))$, where $w = x_1x_2, \cdots x_n$.
These paths form the boundary of the subgraph in $\Gamma(G,X)$ connecting the original vertices on the path labeled by $w_n$, and have length at most $M_1M_2$.  We have thus created cells of the form $\kappa(\tau(x_i))x_i^{-1}$ of perimeter at most $1+M_1M_2$.
These boundary cells, together with the copy of the van Kampen diagram, provide a filling of $w_n$.

In summary, the filling we have created has cells of two types:
\begin{itemize}
    \item the boundary cells, of perimeter $1+M_1M_2$, and
    \item images of the cells in the van Kampen diagram in $\Gamma(G,Y)$, which had perimeter at most $s_0$; after applying the homomorphism $\kappa$, the image of such a cell has perimeter at most $M_2s_0$.
\end{itemize}

Note that we chose $N_0'=\max\{M_2M_1+1,N_0\}$ so all of these cells have perimeter at most $M_2s_0=n$.
This contradicts the existence  of $w_n$. 
Thus $G$ is also densely generated by $Y$.
\end{proof}

A group is called {\em dense} if it is densely generated by some, hence any, finite generating set.
This definition is inspired by  Baumslag's paper \cite{baumslag61} about wreath products $G\wr H$.
We prove in Proposition~\ref{prop:BaumslagDense} that if $H$ is infinite then $G\wr H$ is  dense.

\begin{proposition}\label{prop:BaumslagDense}
  Let $G$ and $H$ be finitely generated groups.	  If $G$ is nontrivial and $H$ is infinite, then
     $G \wr H$ is dense.
\end{proposition}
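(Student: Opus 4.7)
The plan is to adapt Baumslag's classical proof \cite{baumslag61} that $G\wr H$ is not finitely presented whenever $G$ is nontrivial and $H$ is infinite, extracting from it a quantitative density statement. Fix finite symmetric generating sets $X$ for $G$ and $Y$ for $H$; then $X\cup Y$ generates $G\wr H$, and by Lemma~\ref{lem:denseGsetInvariant} it suffices to verify density with respect to this particular generating set. Since $G$ is nontrivial and finitely generated, some $x_0\in X$ represents a nontrivial element of $G$. Since $H$ is infinite, a BFS argument on its Cayley graph with respect to $Y$ shows that every nonnegative integer arises as $|h|_Y$ for some $h\in H$; for each $n\geq 1$ fix $h_n\in H$ with $|h_n|_Y=n$ and a geodesic word $\tilde h_n\in(Y\cup Y^{-1})^n$ representing it.

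Consider the commutator word
\[
w_n=[x_0,\tilde h_n^{-1}x_0\tilde h_n]=x_0^{-1}\tilde h_n^{-1}x_0^{-1}\tilde h_n\,x_0\,\tilde h_n^{-1}x_0\tilde h_n
\]
of length $4n+4$, so $4n\leq|w_n|\leq 5n$ for $n\geq 4$, witnessing the length conditions of Definition~\ref{def:dense} with $E=4$ and $F=5$. In the wreath product $G\wr H=\bigl(\bigoplus_{h\in H}G\bigr)\rtimes H$, the element $\tilde h_n^{-1}x_0\tilde h_n$ lies in the base group as the function supported at $h_n^{-1}$ with value $x_0$, while $x_0$ is the function supported at $1_H$. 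Since $h_n\neq 1_H$ these have disjoint support and therefore commute in the base group, giving $w_n=_{G\wr H}1$.

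The heart of the argument is showing that $w_n$ admits no decomposition $w_n=_F\prod_{i=1}^k\rho_iu_i\rho_i^{-1}$ with every $u_i=_{G\wr H}1$ and every $|u_i|\leq n$. A word $u$ of length $\leq n$ traces a path in the Cayley graph of $G\wr H$ whose $H$-projection is a loop of length $\leq n$, confined to an $H$-ball of radius $n/2$ around its basepoint. I would evaluate both sides of the putative identity in the relation module of $G\wr H$ for its natural (infinite) presentation, whose relators are those of $G$, those of $H$, and the commutator relators $[x_1,y^{-1}x_2 y]$ for $y\in F_Y$ with $\pi_H(y)\neq 1_H$. Baumslag's result implies that the classes of these commutator relators are independent basis elements of the relation module, indexed by $\pi_H(y)\in H$. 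The class of $w_n$ is the basis element indexed by $h_n$, while each conjugate $\rho_iu_i\rho_i^{-1}$ contributes a $\mathbb{Z}[G\wr H]$-combination of basis elements indexed by $H$-elements lying in a ball of radius $n$ around $\pi_H(\rho_i)$; the triangle inequality, together with the geodesic choice of $\tilde h_n$ and the assumption $|u_i|\leq n$, prevents the sum from equaling the class of $w_n$ unless some $|u_j|>n$. The main obstacle is making this linear-independence step precise, and in particular handling the case where $G$ is perfect (so that abelianization-based invariants collapse on $\bigoplus_H G$); in that case one would pass to a nontrivial finite quotient of $G$ and carry out the analogous argument there.
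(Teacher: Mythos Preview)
Your outline is close in spirit to the paper's argument---both single out the commutator relators $[x_0,x_0^{h}]$ and invoke Baumslag---but the execution diverges at the key step, and your version has a genuine gap.

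The paper does \emph{not} work in the relation module. Instead it uses Baumslag's construction directly: for each finite $S\subseteq H$ there is a group $\BaumGroup$ generated by $G$ and $H$ in which $[G^{h_1},G^{h_2}]=1$ exactly when $(h_1,h_2)\in T_S$. Taking $S=B_{H,Y}(n)$, every loop of length $\le n$ in $G\wr H$ already holds in $\BaumGroup$, whereas $[a_1,a_2^h]$ with $|h|_Y=2n{+}1$ does not (since $(e,h)\notin T_S$). Hence that commutator, of length $8n{+}8$, cannot be written as a product of conjugates of loops of length $\le n$. This is a one-line separation once the witness group is in hand, and it works uniformly for every nontrivial $G$---no special pleading for perfect $G$ is needed.

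Your relation-module route has two problems. First, Baumslag does not prove that the commutator relators give independent elements of the relation module; he builds a quotient group in which some of them fail. You would have to establish that independence separately, and your own parenthetical about perfect $G$ shows you already sense the difficulty. Second, your ``triangle inequality'' step does not do what you want. Conjugation by $\rho_i$ acts on the relation module by the image of $\rho_i$ in $G\wr H$, so it can translate support anywhere; there is no constraint on $\pi_H(\rho_i)$. The conjugation-invariant quantity is the \emph{displacement} $h$ in a relator $[x_1,x_2^{h}]$, and with your parameters $|h_n|_Y=n$ and $|u_i|\le n$ the displacements arising from the $u_i$ can have $H$-length up to $n$ as well, so you get no strict separation. (This is precisely why the paper takes $|h|_Y=2n{+}1$ rather than $n$.) Even after fixing the radius, you would still owe the independence statement; the paper's witness-group argument sidesteps both issues at once.
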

\begin{proof}
For $x,y\in G$ let $x^y=y^{-1}xy$, and $[x,y]=x^{-1}y^{-1}xy$. 
Let $G = \langle X \, |\,  P \rangle$ and
$H = \langle Y \, | \, Q \rangle$ be presentations of
the groups $G$ and $H$, where
$X \subseteq G$ and $Y \subseteq H$ are finite generating sets.
For each $h \in H$ choose a geodesic word $u_h \in (Y \cup Y^{-1})^*$ with $\pi(u_h) =_G h$, and let
$U = \{ u_h | h \in H\}$.
Then the wreath product $G \wr  H$ has presentation
\[G \wr  H = \langle  X \cup Y \, | \,
P \cup Q \cup \{[a_1^u,a_2^v]\,|\,
a_1,a_2 \in X, u,v \in U, u\neq v\} \rangle.\]

Let $B_{H,Y} (n)$ denote the ball
of radius $n$ in the group $H$ with respect to the
generating set $Y$.
For a given positive integer $m$, define the set of relators
\[ R_m =P \cup Q \cup  \{[a_1^u,a_2^v]
\mid u,v \in U,u\not=v,\pi(u),\pi(v) \in B_{H,Y}(m)\}.\]

For any set $S \subseteq H$, define the relation
$T_S = \{ (s_1 h,s_2 h) \mid s_1,s_2
\in S, h \in H\}$.
Now we mimic Baumslag's argument for
proving the non-finite presentability of
wreath products, presented in Lemma~3 of  \cite{baumslag61}, see also  \cite{berstein2015}.
Baumslag constructs a group $\BaumGroup$
generated by $G$ and $H$ with the following properties: 
\begin{itemize}[itemsep=5pt]
    \item $G^{h_1}  \cap G^{h_2} =\{1_\BaumGroup\}$ for all $h_1,h_2 \in H$ with $h_1 \not= h_2$, and
    \item $\left[ G^{h_1}, G^{h_2}\right]= \{1_\BaumGroup\}$ if and only if $(h_1,h_2) \in T_S$.

\end{itemize}
Note that instead of requiring all conjugacy classes to commute in $\BaumGroup$, we only require this when the conjugating elements form a pair in the relation $T_S$.

Choosing $S = B_{H,Y}(n)$ for any fixed $n$, it follows that in $\BaumGroup $ we have $[G,G^h]\not=1_\BaumGroup$
for any $h$ for which
$(1_H,h) \not\in T_S$.
In particular, this holds for any
$h \in B_{H,Y} (2n+1) \setminus B_{H,Y} (2n)$.
Therefore there is a relation $[a_1,a_2 ^h]$ in
$R_{2n+1} \setminus R_{2n} $ which cannot be obtained
as a product of conjugates of the relations from
$R_n$.

Now observe that every loop $w \in (X \cup X^{-1} \cup Y \cup Y^{-1})^*$
of length $|w| \leqslant n$
in the wreath product $G \wr H$
can be represented as a product of
conjugates of relations from $R_n$.
Therefore, the loop of length $8n + 8$,
given by the relation $[a_1,a_2 ^h]$,
cannot be decomposed into
smaller loops of length less or equal than $n$.
Thus, $G \wr H$ is dense.
\end{proof}

\section{Separating non-finitely presented Cayley automatic groups from automatic groups}\label{sec:thmB}

The proof of Theorem~\ref{thmB:nfp} relies 
on the following proposition.

\begin{proposition}
   \label{prop:nonFP}

Let $G$ be a non-finitely presented group with  finite generating set $S$.  Then there exists a non-decreasing step function $\phiGS \in\F$, depending on $G$ and $S$,  and  an infinite sequence of integers $\{n_i\}$ such that $\phiGS(n_i)=n_i$ and for any Cayley automatic structure $(S,S,L,\psi)$ on $G$,
\begin{enumerate}
    \item
    $\phiGS \preceqF h_{S,\psi}$, and
    \item if $G$ is dense then
    $\mathfrak i\preceqF h_{S,\psi}$.
\end{enumerate}
\end{proposition}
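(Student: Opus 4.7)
The plan is to construct $\phi_{G,S}$ from a group-theoretic invariant measuring the minimum length of loops that resist short fillings. For each $k \in \N$ set
\[
\mu(k) = \min\bigl\{\,|w| \,:\, w \in S^*,\ w =_G 1_G,\ \text{every factorisation } w =_{F_S} \textstyle\prod_i \rho_i u_i \rho_i^{-1} \text{ with } u_i =_G 1_G \text{ has some } |u_i| > k\,\bigr\}.
\]
The trivial factorisation $w = w$ forces $\mu(k) \geq k+1$. Because $G$ is not finitely presented, $\mu(k)$ is finite for every $k$ and $\mu(k) \to \infty$: were it bounded by some $M$, the finitely many relators of length at most $M$ would present $G$. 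Define $\phi_{G,S}(n) = \mu^{-1}(n) + 1$ where $\mu^{-1}(n) = \max\{k : \mu(k) \leq n\}$; this is a non-decreasing, unbounded step function in $\F$ that depends only on $G$ and $S$.

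To obtain the diagonal sequence, I argue that the set of distinct lengths of \emph{irreducible} relators (words $w =_G 1_G$ whose every factorisation contains some $|u_i| \geq |w|$) is infinite. Indeed, if all irreducible relators had length at most some $\ell_m$, then every longer relator would admit a strictly shorter factorisation, and induction on length would show $G$ is presented by the finite set of relators of length at most $\ell_m$, contradicting non-FP. For each length $n_i$ of an irreducible relator one has $\mu(n_i - 1) = n_i$ while $\mu(n_i) \geq n_i + 1$, whence $\mu^{-1}(n_i) = n_i - 1$ and $\phi_{G,S}(n_i) = n_i$, yielding the required infinite sequence.

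For part~(1), fix a Cayley automatic structure $(S,S,L,\psi)$ with the constants $\cA, \cB, \cC, \nD$ furnished by Proposition~\ref{prop:dehnbound}. For $n$ large enough that $k := \mu^{-1}(n)$ satisfies $\mu(k) \geq \nD$, applying Proposition~\ref{prop:dehnbound}(1) to a minimal loop $w$ realising $\mu(k)$ yields a factorisation with every $|u_i| \leq 4 h_{S,\psi}(\cA\mu(k) + \cB) + \cC$; but the definition of $\mu(k)$ forces some $|u_i| > k$, so $k + 1 \leq 4 h_{S,\psi}(\cA \mu(k) + \cB) + \cC$. Since $\mu(k) \leq n$ and $h_{S,\psi}$ is non-decreasing, $\phi_{G,S}(n) = k + 1 \leq 4 h_{S,\psi}(\cA n + \cB) + \cC$, and Lemma~\ref{lemma:lose_the_constant} converts this into $\phi_{G,S} \preceqF h_{S,\psi}$.

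For part~(2), the dense hypothesis produces, for each sufficiently large $n$, a loop $w_n$ with $|w_n| \in [En, Fn]$ whose every factorisation contains some $|u_j| > n$. Proposition~\ref{prop:dehnbound}(1) directly gives $n + 1 \leq 4 h_{S,\psi}(\cA |w_n| + \cB) + \cC \leq 4 h_{S,\psi}(\cA F n + \cB) + \cC$, and Lemma~\ref{lemma:lose_the_constant} yields $\ii \preceqF h_{S,\psi}$. The main obstacle I anticipate is the argument that distinct irreducible-relator lengths are infinite; this is the place where non-finite presentation feeds into the equality $\phi_{G,S}(n_i) = n_i$, and it is what distinguishes the non-FP situation from the finitely presented setting of Theorem~\ref{thmA:fp}.
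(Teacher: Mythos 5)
Your proof is correct and follows essentially the same route as the paper: both arguments hinge on applying Proposition~\ref{prop:dehnbound}(1) to irreducible relators (loops whose every factorisation contains a conjugated piece at least as long as the loop itself) and then invoking Lemma~\ref{lemma:lose_the_constant}, and your part (2) is identical to the paper's. The only differences are cosmetic but worth noting: the paper simply chooses an infinite sequence of irreducible relators of strictly increasing lengths $l_i$ and sets $\phiGS(n)=l_i$ for $l_i\le n<l_{i+1}$, whereas your $\mu$-based definition $\phiGS(n)=\mu^{-1}(n)+1$ is a canonical (choice-free) version of the same step function; and you explicitly justify, by induction on relator length, that infinitely many irreducible-relator lengths exist, a fact the paper asserts without proof.
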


\begin{proof}
Since $G$ is not finitely presented, there exists an infinite sequence of words  $w_i\in S^*$ so that
\begin{itemize}[itemsep=5pt]
    \item $w_i =_G 1_G$,
    \item if $w_i=\prod_{j=1}^k \rho_ju_j\rho_j^{-1}$ for some $k \in \N$ and $u_j, \rho_j \in S^*$  for $1\leq j\leq k$, then $|u_j|\geq |w_i|$ for at least one value of $j$, and
    \item $|w_i| = l_i$, and $l_i < l_{i+1}$.
\end{itemize}
Define $\phiGS \in   \F$ by $\phiGS(n) = l_i$ for $l_i \leq  n <  l_{i+1}$.

Let $c,d,\cC$, and $n_0$ be the constants from  Proposition~\ref{prop:dehnbound}. Then for any $i\in\N$ with $l_i\geq n_0$ we can
decompose $w_i$ into  loops $u_{i,j}$ using the algorithm described in Proposition~\ref{prop:dehnbound}, and illustrated in Figures~\ref{fig:filling1}, \ref{fig:filling2} and \ref{fig:filling3}, so that $|u_{i,j}| \leq 4h_{S,\psi}(cl_i+d) + \cC$.
Our choice of $w_i$ ensures that for some $j$ we have  $\phiGS(l_i) = l_i = |w_i| \leq |u_{i,j}|$.

Suppose $l_i \leq n < l_{i+1}$.
It follows that for this choice of $j$,
\[ \phiGS(n) = l_{i}\leq |u_{i,j}| \leq 4h_{S,\psi}(c l_{i} + d) + \cC \leq 4h_{S,\psi}(cn+d)+\cC. \]
It then follows from Lemma~\ref{lemma:lose_the_constant} that
 $\phiGS(n) \preceqF h_{S,\psi}(n)$.

Now suppose that $G$ is densely generated by $X$, so there exist constants $E,F$ and $N_0$ so that for all $n \geq N_0$ there exists a loop $w_n=_G 1_G$ so that
\bi
\item $En \leq |w_n| \leq Fn$, and
\item $w_n$ cannot be subdivided into loops all of whose lengths are bounded above by $n$.  That is, if we write $w_n = \Pi_{i=1}^k \rho_i u_i \rho_i^{-1}$ where each $u_i =_G 1_G$ then for some $i$ we have $|u_i| \geq n$.
\ei
Again it follows from Proposition~\ref{prop:dehnbound} that there are constants $c,d,\cC$ and $n_0$ so that for $n \geq \max(n_0,N_0)$, each $u_j$ in the above decomposition of $w_n$ we have
$|u_j| \leq 4h_{S,\psi}(cFn+d)+\cC.$
Since $G$ is dense, it follows that for some $j$ we have
$$n \leq |u_j|\leq 4h_{S,\psi}(cFn+d)+\cC.$$
As this is true for every $n \in \N$ with $n \geq \max(n_0,N_0)$,
it follows from Lemma~\ref{lemma:lose_the_constant} that $\ii \preceqF h_{S,\psi}$.
\end{proof}

We now prove Theorem~\ref{thmB:nfp}.

\ThmB*
\begin{proof}
Suppose $(Y,Y, L, \psi)$ is a Cayley automatic structure for $G$ with respect to some arbitrary finite generating set $Y$.

If 
$G$ is dense.
 it follows from part (2) of Proposition~\ref{prop:nonFP} that  $\ii\preceqF  h_{Y,\psi}$.

Next, choose a finite generating set  $S$ for $G$. 
It follows from part (1) of Proposition~\ref{prop:nonFP} that the function $\phiGS$ is such that $\phiGS\preceqF h_{S,\psi'}\approxF h_{Y,\psi}$. Setting   $\phi=\phiGS$ gives a  step function which suffices  to prove the theorem.
\end{proof}

\section{Conclusion}

Theorems~\ref{thmA:fp} and~\ref{thmB:nfp} together imply that the only possible candidates for non-automatic Cayley automatic
groups where the geometry comes close to resembling that of an automatic group (in the coarse sense considered here) 
are groups with
quadratic or almost quadratic Dehn function. The class of groups with quadratic Dehn function
is a wild and interesting collection (Gersten referred to the class as a “zoo” in \cite{Gersten}),
including the following 
 groups.
\begin{enumerate}
\item The higher Heisenberg groups $H_{2k+1}$ for $k>2$ \cite{MR1616147,MR1253544,MR1698761}.
\item The higher rank lamplighter groups, or Diestel-Leader groups \cite{Taback18}.
\item Stallings’ group and its generalizations \cite{CarterForester,DisonElderRileyYoung}.  These examples are not of type $FP_{\infty}$, hence not automatic; it is not known whether they admit Cayley automatic structures.
\item  Thompson’s group $F$, which is not known to be automatic or Cayley automatic though is 1-counter-graph automatic \cite{cga}.
\item  An example of \Olshan\ and Sapir \cite{OSapir-CP} which has quadratic Dehn
function and unsolvable conjugacy problem. 
\end{enumerate}

\Olshan\  \cite{WeirdQuadDehn,Ol-almost} also gives an example of a group whose Dehn function is
almost quadratic, and so Theorem~\ref{thmA:fp} does not apply to this group. It is not known whether this 
example or the unsolvable conjugacy example  of  \Olshan\  and Sapir are Cayley automatic. Note that our definition of
a super-quadratic function is equivalent to saying the function is not almost quadratic, as shown in Lemma~\ref{lem:equiv-super-quad}.

Progress towards proving Conjecture~\ref{conj1}  takes two forms.
First, one must improve the exhibited bounding functions given for groups with super-quadratic Dehn function and for non-dense non-finitely presented groups to show that these groups are $\ii$-separated.
Second, one must prove that non-automatic Cayley automatic groups with quadratic and almost quadratic Dehn function are $\ii$-separated. We have some optimism for progress on the first part, and find the second more difficult.

\begin{appendix}
\section{Further remarks  on strongly-super-polynomial functions}\label{appendix:super-strong}

In this appendix we give more details on 
strongly-super-polynomial and super-polynomial functions.

 \begin{proposition}
 \label{prop:strongSuper}	
    Let $c, d \in \mathbb{R}$
    	 such that $0< c < d$. Then for a function $f \in \mathcal{F}$, we have
    $n^{c} f \ll f$ if and only if $n^{d} f \ll f$.
 \end{proposition}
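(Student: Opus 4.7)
The plan is to handle the two implications separately. The forward implication---that $n^d f \ll f$ implies $n^c f \ll f$ when $c < d$---is essentially immediate: if $t \in \F$ is an unbounded witness with $n^d f \cdot t \preceqF f$, then $s(n) := n^{d-c} t(n)$ is still non-decreasing and unbounded (since $d - c > 0$), and $n^c f(n) s(n) = n^d f(n) t(n) \preceqF f(n)$, so $s$ witnesses $n^c f \ll f$.

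For the substantive reverse implication, the strategy is to iterate and bootstrap. By hypothesis there exist positive integers $K, M, N$ and an unbounded $t \in \F$ with $n^c f(n) t(n) \leq K f(Mn)$ for all $n \geq N$. Substituting $n \mapsto M^i n$ for $i = 0, 1, \ldots, k - 1$ and chaining the resulting inequalities, a direct induction on $k$ yields
\[ \frac{M^{ck(k-1)/2}}{K^k} \, n^{kc} f(n) \prod_{i=0}^{k-1} t(M^i n) \;\leq\; f(M^k n) \]
for all $n$ above some threshold $N_k$. Choosing $k := \lceil d/c \rceil$ so that $kc \geq d$, this rearranges to $n^d f(n) \cdot s(n) \leq f(M^k n)$, where
\[ s(n) := \frac{M^{ck(k-1)/2}}{K^k} \, n^{kc - d} \prod_{i=0}^{k-1} t(M^i n). \]

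The final step is to check that $s \in \F$ and is unbounded. Each factor is a positive, non-decreasing function of $n$: the power $n^{kc-d}$ is non-decreasing since $kc - d \geq 0$, and each $t(M^i n)$ is non-decreasing in $n$ because $t$ is. Since $t$ itself is unbounded, so is $s$. Hence $s$ witnesses $n^d f \ll f$ with constants $K' = 1$ and $M' = M^k$. I do not foresee a serious obstacle; the main points of care are tracking the multiplicative constants through the induction and ensuring that $M^i n$ lies in the common domain of $f$ and $t$ for all $i \leq k - 1$ when $n$ is large, both of which are routine bookkeeping.
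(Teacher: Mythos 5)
Your proof is correct and follows essentially the same route as the paper's: the easy direction absorbs the surplus power $n^{d-c}$ into the unbounded witness, and the hard direction telescopes the defining inequality $\lceil d/c\rceil$ times over the points $n, Mn, \dots, M^{k-1}n$ to manufacture a witness for the larger exponent. The only (cosmetic) difference is that the paper first deduces $n^{(k_0+1)c}f \ll f$ and then invokes the easy direction to come back down to $d$, whereas you fold the factor $n^{kc-d}$ and the accumulated constants directly into the witness $s$.
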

 \begin{proof}
    Assume first that $n^{d} f \ll f$. Then there exists
    an unbounded function $t \in \mathcal{F}$ and integer constants $K,M>0$ and $N \geqslant 0$
    such that
    $n^{c} f(n) t(n) \leq K f (Mn)$ for all
    $n \geq N$.
    Let $\tau  (n)= n^{d-c} t(n)$. Then $\tau \in \mathcal{F}$ and as $t$ is unbounded, so is $\tau$. Writing
    $n^{c} f(n) \tau(n)\leq K f (Mn)$, it follows that
    $n^{c} f \ll f$.

    Now assume that $n^{c} f \ll f$. Then there exists
    an unbounded function $t \in \mathcal{F}$ and
    integer constants $K,M>0$ and $N \geqslant 0$
    such that
    $n^{c} f(n) t(n) \leqslant K f (Mn)$ for all
    $n \geqslant N$. Therefore, the inequality
    \begin{equation*}
        n^{c} t (n) \leqslant K \frac{f(Mn)}{f(n)}.
    \end{equation*}
    holds for all $n \geqslant N$. This implies that
    the inequality
    \begin{equation}
    \label{main_ineq}
       (M^k n)^{c} t(M^k n) \leqslant K \frac{f(M^{k+1}n)}
       {f(M^{k}n)}
    \end{equation}
    holds for all integers $k \geqslant 0$ and $n \geqslant N$.
    Let $k_0 = \lfloor \frac{d}{c}\rfloor$; then we have
    $d \leqslant (k_0+1) c$.
    Allowing $k$ to take all values between $1$ and $k_0$ in \eqref{main_ineq} and multiplying the resulting inequalities together yields
    \begin{dmath*}
    \label{main_ineq2}
        n^{c} t(n) (M n)^{c} t(M n) \dots
       (M^{k_0} n)^{c} t(M^{k_0} n)
       \nonumber \\\leq
       K^{k_0+1} \frac{f(Mn)}{f(n)} \frac{f(M^2 n)}{f(M n)}
       \dots \frac{f(M^{k_0+1}n)}{f(M^{k_0}n)}
       =
       K^{k_0 +1}  \frac{f(M^{k_0 +1}n)}{f(n)}.
    \end{dmath*}

   It follows that
    \begin{equation}
    \label{main_ineq3}
       M' n^{c (k_0+1)}  \tau (n) \leqslant K ^{k_0 +1}
       \frac{f(M^{k_0+1}n)}{f(n)}
    \end{equation}
 holds for all $n \geqslant N$, where
 $\tau (n) = t(n) t(Mn)\dots t(M^{k_0}n)$ and
 $M' =$  $M^{c}$  $M^{2c} \dots M^{k_0 c}$.
 Since $M' \geqslant 1$, the inequality in \eqref{main_ineq3}
 implies that
 \begin{equation}
 \label{final_ineq}
    n^{c (k_0 +1)} f(n) \tau (n) \leqslant
    K^{k_0+1} f(M^{k_0 + 1}n)
 \end{equation}
 for all $n \geqslant N$. By construction, $\tau(n)$ is both an element of ${\mathcal F}$ and an
 an unbounded function.
 Therefore, $n^{c (k_0 + 1)} f \ll f$.
 As $d \leq (k_0+1)c$ it follows from the initial argument in the proof that $n^{d} f \ll f$.
\end{proof}

The following lemma presents an example of  a function which is super-polynomial
but not  strongly-super-polynomial.

\begin{lemma} \label{lem:NotStrongSuperP}
For given real number $\alpha > 1$, let $f_{\alpha}\colon \N\to\R $ be the function  defined by $f_{\alpha}(n)= \alpha^{(\ln n)^{1.5} }$. Then \begin{enumerate}
\item $f\in \F$,
\item $f_\alpha$ is super-polynomial, and
\item $f_\alpha$ is not strongly-super-polynomial.  	
\end{enumerate}
\end{lemma}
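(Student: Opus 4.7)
Proof proposal for Lemma~\ref{lem:NotStrongSuperP}.

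The plan is to treat the three claims in turn and reduce everything to a single asymptotic estimate for the quantity $\ln(f_\alpha(Mn)/f_\alpha(n))$. For claim (1), observe that on any domain of the form $[N,\infty)\cap\N$ with $N\ge 2$, the function $n\mapsto (\ln n)^{1.5}$ is strictly increasing and non-negative, and since $\alpha>1$ the map $x\mapsto \alpha^x$ is strictly increasing, so the composition $f_\alpha$ is non-decreasing and lies in $\F$. For claim (2), compute
\[
\frac{\ln f_\alpha(n)}{\ln n}=\frac{(\ln n)^{1.5}\ln\alpha}{\ln n}=(\ln\alpha)(\ln n)^{0.5}\longrightarrow\infty,
\]
which matches Definition~\ref{defn:superpoly}.

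The substantive step is (3). I would argue by contradiction: suppose $n^2f_\alpha\ll f_\alpha$. Then there exist integer constants $K,M,N_0\ge 1$ and an unbounded $t\in\F$ with
\[
n^2 f_\alpha(n)\, t(n)\;\le\; K\, f_\alpha(Mn)\qquad(n\ge N_0).
\]
Taking $\ln$ of both sides and dividing by $\ln\alpha$ yields
\[
\frac{2\ln n+\ln t(n)-\ln K}{\ln\alpha}\;\le\;(\ln Mn)^{1.5}-(\ln n)^{1.5}.
\]
Write $m=\ln M$ and $x=\ln n$. By the mean value theorem applied to $g(x)=x^{1.5}$,
\[
(x+m)^{1.5}-x^{1.5}\;=\;1.5\,m\,\xi^{0.5}\quad\text{for some }\xi\in(x,x+m),
\]
so the right-hand side above is at most $1.5\,m\,(\ln n+m)^{0.5}$. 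Hence
\[
2\ln n\;\le\;2\ln n+\ln t(n)-\ln K\;\le\;1.5\,m\,(\ln\alpha)\,(\ln n+m)^{0.5}+\ln K,
\]
which is impossible for all sufficiently large $n$ because the left-hand side grows linearly in $\ln n$ while the right-hand side grows like $(\ln n)^{0.5}$. This contradiction shows $n^2f_\alpha\not\ll f_\alpha$, i.e.\ $f_\alpha$ is not strongly-super-polynomial.

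The only place where any care is needed is the MVT bound for $(\ln Mn)^{1.5}-(\ln n)^{1.5}$; once that is in hand, the rest is bookkeeping. I do not expect any real obstacle in this proof — the whole point of the example is that the exponent $(\ln n)^{1.5}$ is super-polynomial (because $(\ln n)^{0.5}\to\infty$) yet grows slowly enough that the multiplicative ratio $f_\alpha(Mn)/f_\alpha(n)$ is only $\alpha^{O(\sqrt{\ln n})}$, which cannot absorb a factor of $n^2$.
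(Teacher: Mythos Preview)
Your proof is correct and follows the same strategy as the paper: parts (1) and (2) are handled identically, and for (3) both argue by contradiction, take logarithms, and reduce to showing that $(\ln Mn)^{1.5}-(\ln n)^{1.5}=O\bigl((\ln n)^{0.5}\bigr)$ cannot dominate the $2\ln n$ term. Your mean value theorem bound is tidier than the paper's longer chain of algebraic manipulations (factoring out $\ln n$ and using the identity $(\ln Mn)^{0.5}-(\ln n)^{0.5}=\ln M/\bigl((\ln Mn)^{0.5}+(\ln n)^{0.5}\bigr)$), but both arrive at the same contradiction; one cosmetic point is that your inequality $2\ln n\le 2\ln n+\ln t(n)-\ln K$ needs $t(n)\ge K$, which holds for large $n$ since $t$ is unbounded, and the trailing ``$+\ln K$'' on the right of your final display is stray, though neither affects the argument.
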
	

\begin{proof}
It is clear that $ f_{\alpha}\in \F$ since $\alpha>1$.

We have \[ \lim_{n \to \infty} \frac{\ln (\alpha^{(\ln n)^{1.5}})}{\ln n}
=\lim_{n \to \infty} \frac{(\ln n)^{1.5}\ln (\alpha)}{\ln n}
=\ln (\alpha)\lim_{n \to \infty} (\ln n)^{0.5}=\infty
\]
so $f_{\alpha}$ is
   super-polynomial. Note that $\alpha>1$ so $\ln \alpha>0$.

   Suppose (for contradiction) that
$f_{\alpha}$ is strongly-super-polynomial, so  there is an unbounded
   function $t \in \mathcal{F}$ and positive integer constants $K,M$ and $N$
   such that $n^2 f_{\alpha}(n) t(n) \leq K f_{\alpha}(Mn)$
   for all $n \geq N$. This means

    \begin{equation*}
    \label{ex_ineq1}
 t(n)  \leq
      \frac{ K\alpha^{(\ln(Mn))^{1.5}-(\ln(n))^{1.5}}}{n^2 }
   \end{equation*}

   Taking the logarithm of this inequality we obtain:
   \begin{dmath*}
    \label{ex_ineq1}
    \ln(t(n))\leq \ln K +\left[(\ln(Mn))^{1.5}-(\ln(n))^{1.5}\right]\ln \alpha-2\ln n
   \end{dmath*}  which we can write
   as
   \begin{dmath*}
    \label{ex_ineq3}
        \ln(t(n))\leq \ln K +\left[(\ln(M)+\ln(n))(\ln(Mn))^{0.5}-(\ln (n))(\ln(n))^{0.5}\right]\ln \alpha-2\ln n
   \end{dmath*}
      which becomes

      \begin{dmath*}
    \label{ex_ineq4}
      \ln( t(n))
      \leqslant \ln K + (\ln( M) )(\ln (Mn))^{0.5}  \ln \alpha +
      (\ln( n) )\left[\left(\ln (Mn))^{0.5}
       - (\ln n)^{0.5} \right) \ln( \alpha)-2 \right]
   \end{dmath*}
         which becomes
      \begin{dmath*}
    \label{ex_ineq4}
      \ln( t(n))
      \leqslant \ln K + (\ln( M) )(\ln (Mn))^{0.5}  \ln \alpha +
     \ln( \alpha) (\ln( n) )\left[\left(\ln (Mn))^{0.5}
       - (\ln n)^{0.5} \right) -\frac{2}{\ln( \alpha)} \right]
   \end{dmath*}

   Now if we let
   $B=\ln (Mn)^{0.5} - (\ln n)^{0.5}$ then
   $B(\ln (Mn)^{0.5} + (\ln n)^{0.5})
   =\ln (Mn)-(\ln n)=\ln M$ and so \[B=\frac{\ln M}{(\ln (Mn))^{0.5}
       + (\ln n)^{0.5}}.\]

   Thus
      \begin{dmath*}
    \label{ex_ineq5}
      \ln( t(n))
      \leqslant \ln K + (\ln( M) )(\ln (Mn))^{0.5} \ln \alpha +
     \ln( \alpha) (\ln( n) )\left[\left(\frac{\ln M}{(\ln (Mn))^{0.5}
       + (\ln n)^{0.5}}\right) -\frac{2}{\ln( \alpha)} \right]
   \end{dmath*}
        which becomes
         \begin{dmath*}
    \label{ex_ineq7}
      \ln( t(n))
      \leqslant \ln K +
     \ln \alpha \ln( M) \left[(\ln (Mn))^{0.5} +
     (\ln( n) )\left[\left(\frac{1}{(\ln (Mn))^{0.5}
       + (\ln n)^{0.5}}\right) -\frac{2}{\ln( \alpha)\ln M} \right]\right]
   \end{dmath*}
   which becomes
   \begin{dmath*}
    \label{ex_ineq8}
      \ln( t(n))
      \leqslant \ln K +
     \ln \alpha \ln( M) \left[\frac{(\ln (Mn))}{(\ln (Mn))^{0.5}} +
     (\ln( n) )\left[\left(\frac{1}{(\ln (Mn)^{0.5}
       + (\ln n)^{0.5}}\right) -\frac{2}{\ln( \alpha)\ln M} \right]\right]
   \end{dmath*}
   which becomes
     \begin{dmath*}
      \ln( t(n))
      \leqslant \ln K +
     \ln \alpha \ln( M) \left[\frac{(\ln (M)+\ln(n))}{(\ln (Mn))^{0.5}} +
     (\ln( n) )\left[\left(\frac{1}{(\ln (Mn)^{0.5}
       + (\ln n)^{0.5}}\right) -\frac{2}{\ln( \alpha)\ln M} \right]\right]
 \end{dmath*}
      which becomes
    \begin{dmath*}
      \ln( t(n))
      \leqslant \ln K +
     \ln \alpha \ln( M) \ln (n)
     \left[ \left[\frac{\ln (M)}{(\ln(n))\ln (Mn))^{0.5}} +
     \frac{1}{(\ln (Mn))^{0.5}} +
    \frac{1}{(\ln (Mn)^{0.5}
       + (\ln n)^{0.5}}\right] -\frac{2}{\ln( \alpha)\ln M} \right].
   \end{dmath*}

   The expression in the inside square brackets is going to  $0$ as $n\to\infty$, so eventually it will be less than $\frac{2}{\ln( \alpha)\ln M}$, contradicting the fact that $t\in\F$.
   \end{proof}

The next proposition proves that any function which is strongly-super-polynomial is also super-polynomial.

       \begin{proposition}
       \label{prop:strong_implies_super}
    Let $f \in \F$ be a non-zero function.
    If $f$ is strongly-super-polynomial, then
    $f$ is super-polynomial.
 \end{proposition}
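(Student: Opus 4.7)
The plan is to deduce the super-polynomial growth of $f$ directly by iterating the strong-super-polynomial inequality and tracking logarithms. By hypothesis $n^{2} f \ll f$, so there exist an unbounded function $t \in \F$ and positive integer constants $K,M,N$ with
$$ n^{2} f(n) t(n) \leq K f(Mn) $$
for all $n \geq N$. Since $t \in \F$ is nondecreasing and unbounded, $t(n) \to \infty$, so (after enlarging $N$) we may assume $t(n) \geq 1$, giving the simpler inequality $n^{2} f(n) \leq K f(Mn)$ for $n \geq N$. Because $f$ is non-zero and nondecreasing, we may also assume $f(N) > 0$, and then the inequality propagates positivity up the iteration so $\ln f$ is well defined at every point we need.

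Next I would iterate this inequality from a fixed starting point $n_{0} := N$. Applying it $k$ times yields
$$ f(M^{k} n_{0}) \geq \frac{1}{K^{k}}\Bigl(\prod_{i=0}^{k-1}(M^{i}n_{0})^{2}\Bigr) f(n_{0}) = \frac{n_{0}^{2k} M^{k(k-1)}}{K^{k}}\,f(n_{0}). $$
Taking logarithms,
$$ \ln f(M^{k} n_{0}) \geq k(k-1)\ln M + 2k\ln n_{0} - k\ln K + \ln f(n_{0}), $$
which is a quadratic in $k$ with positive leading coefficient $\ln M > 0$. This is the crucial estimate: along the geometric sequence $M^{k} n_{0}$, the logarithm of $f$ grows quadratically in $k$, whereas $\ln(M^{k} n_{0}) = k\ln M + \ln n_{0}$ only grows linearly.

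Finally I would extend this to all $m \to \infty$ by monotonicity. Given $m \geq n_{0}$, let $k = \lfloor \log_{M}(m/n_{0}) \rfloor$, so $M^{k}n_{0} \leq m < M^{k+1}n_{0}$. Then $f(m) \geq f(M^{k} n_{0})$ while $\ln m \leq (k+1)\ln M + \ln n_{0}$, and hence
$$ \frac{\ln f(m)}{\ln m} \;\geq\; \frac{k(k-1)\ln M + 2k\ln n_{0} - k\ln K + \ln f(n_{0})}{(k+1)\ln M + \ln n_{0}}. $$
As $m \to \infty$ we have $k \to \infty$, the numerator is asymptotically $k^{2}\ln M$ and the denominator is asymptotically $k \ln M$, so the ratio tends to $+\infty$. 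This gives $\lim_{n\to\infty} \ln f(n)/\ln n = \infty$, meaning $f$ is super-polynomial.

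There is no real obstacle here beyond careful bookkeeping: one only needs to verify that the iteration stays inside the domain of $f$ (immediate from $M \geq 1$), that $f$ remains strictly positive throughout (inductive from $f(n_{0})>0$ and $f(Mn) \geq n^{2}f(n)/K$), and that the exponent $2$ in the hypothesis is enough — no appeal to Proposition~\ref{prop:strongSuper} and larger values of $c$ is required, since the quadratic-in-$k$ term in the iterated log-inequality already dominates $\ln m$.
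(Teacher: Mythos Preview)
Your proof is correct and takes a genuinely different route from the paper's. The paper first invokes Proposition~\ref{prop:strongSuper} to upgrade the hypothesis $n^{2}f\ll f$ to $n^{c}f\ll f$ for every $c>0$, and then for each fixed $c$ shows that $\ln f(n)/\ln n$ eventually exceeds $0.9c$; since $c$ is arbitrary, the limit is $\infty$. You instead iterate the single $c=2$ inequality along the geometric sequence $M^{k}n_{0}$, observe that $\ln f(M^{k}n_{0})$ grows quadratically in $k$ while $\ln(M^{k}n_{0})$ grows linearly, and then interpolate by monotonicity. Your argument is more self-contained (it never needs Proposition~\ref{prop:strongSuper}) and slightly sharper in that it extracts the super-polynomial conclusion from a single exponent; the paper's approach is more modular, cleanly separating the ``all exponents are equivalent'' phenomenon from the final limit computation.

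One small point worth making explicit: you assert $\ln M>0$ for the quadratic leading term. Since $M$ is a positive integer this only fails when $M=1$, and in that case $f(Mn)=f(n)$ is nondecreasing so the same inequality holds with $M$ replaced by $2$; alternatively, $M=1$ would force $n^{2}t(n)\leq K$, contradicting unboundedness of $t$. Either way the assumption $M\geq 2$ is harmless, but it deserves a sentence.
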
	

 \begin{proof}
    Since $f$ is strongly-super-polynomial, by Proposition~\ref{prop:strongSuper} we have $n^cf\ll f$ for {\em any} arbitrary $c>0$ we wish to choose.
    So for any $c>0$ there are positive constants
    $K_c,M_c$ and $N_c$ and
    an unbounded function $t_c \in \mathcal{F}$
    so that
    the inequality
    \begin{equation*}
    	n^c f(n) t_c(n) \leqslant K_c f(M_cn)
    \end{equation*} 	
    holds for all $n \geq N_c$.
    Therefore, for all $n \geq N_cM_c$ we have
    \begin{equation*}
    \label{str_sup_pol_eq1}
       \left\lfloor \frac{n}{M_c}
       \right\rfloor^c
       f\left( \left\lfloor \frac{n}{M_c}
       \right\rfloor\right)
       t_c\left(\left\lfloor \frac{n}{M_c}
       \right\rfloor\right) \leqslant K_c
       f \left( M_c \left\lfloor \frac{n}{M_c}
       \right\rfloor\right) \leqslant K_c f (n).
    \end{equation*}
    Taking the logarithm of this inequality
    we obtain
    \begin{equation*}
    \label{str_sup_pol_eq2}
       c \ln \left\lfloor \frac{n}{M_c}
       \right\rfloor +
       \ln f\left( \left\lfloor \frac{n}{M_c}
       \right\rfloor\right)  +  \ln t_c \left(\left\lfloor \frac{n}{M_c}
       \right\rfloor\right) \leqslant
       \ln K_c  + \ln f (n).
    \end{equation*}
    which becomes
        \begin{equation*}
    \label{str_sup_pol_eq3}
       c \ln \left\lfloor \frac{n}{M_c}
       \right\rfloor +
       \ln f\left( \left\lfloor \frac{n}{M_c}
       \right\rfloor\right)  +  \ln t_c \left(\left\lfloor \frac{n}{M_c}
       \right\rfloor\right) - \ln K_c\leqslant
         \ln f (n).
    \end{equation*}

    Now since $f$ and $t_c$ are both unbounded functions, there exists some  $N_c'\geq N_cM_c$ so that \[\ln f\left( \left\lfloor \frac{n}{M_c}
       \right\rfloor\right)  +  \ln t_c \left(\left\lfloor \frac{n}{M_c}
       \right\rfloor\right)-\ln K_c\geq 0\]
       for all $n\geq N_c'$. Thus we have
               \begin{equation*}
    \label{str_sup_pol_eq4}
       c \ln \left\lfloor \frac{n}{M_c}
       \right\rfloor
      \leq
         \ln f (n)
    \end{equation*} for all $n\geq N_c'$.

    Dividing sides
    by $\ln n$, we obtain that
         \begin{equation}
    \label{str_sup_pol_eq5}
       c \frac{\ln \left\lfloor \frac{n}{M_c}
       \right\rfloor}{\ln n}
       \leq
         \frac{\ln f (n)}{\ln n}
    \end{equation} for all $n\geq N_c'$.

    Now choose $I_c\in \N$ so that $I_c\geq N_c'$
    and $\frac{\ln \left\lfloor \frac{n}{M_c}
       \right\rfloor}{\ln n}>0.9$ for all $n\geq I_c$. Observe that the limit  $\lim_{n\to\infty}\frac{\ln \left\lfloor \frac{n}{M_c}
       \right\rfloor}{\ln n}=1$ and the term $\frac{\ln \left\lfloor \frac{i}{M_c}
       \right\rfloor}{\ln i}$ is increasing, so such a value $I_c$ exists.

    Then from this observation and equation~(\ref{str_sup_pol_eq5})  we get
      \begin{equation*}
    \label{str_sup_pol_eq6}
     0.9c\leq   c  \frac{\ln \left\lfloor \frac{n}{M_c}
       \right\rfloor}{\ln n}
       \leq
         \frac{\ln f (n)}{\ln n}
    \end{equation*} for all $n\geq I_c$.

    Since $c>0$ can be arbitrary, this shows that the limit of $  \frac{\ln f (n)}{\ln n}$ must go to $\infty$.
    \end{proof}

\end{appendix}

\section*{Acknowledgements}

The authors thank the anonymous reviewer for their helpful feedback and careful proofreading.
\bibliographystyle{plain}
\bibliography{BET_bibliography}

\end{document}